\documentclass[9pt]{article}%
\usepackage{bbm}
\usepackage{epsfig}
\usepackage{graphics,graphicx,amssymb,amsmath,verbatim}
\usepackage{amssymb}
\usepackage{mathrsfs}
\usepackage{amsfonts}
\usepackage{amsmath}
\usepackage{graphicx}
\usepackage{easybmat}
\usepackage{color}%
\providecommand{\U}[1]{\protect \rule{.1in}{.1in}}
\newtheorem{theorem}{Theorem}

\newtheorem{assumption}{Assumption}

\newtheorem{definition}{Definition}

\newtheorem{lemma}{Lemma}

\newtheorem{problem}{Problem}

\newtheorem{remark}{Remark}

\newenvironment{proof}[1][Proof]{\noindent \textbf{#1.} }{\  \rule{0.5em}{0.5em}}
\definecolor{blue}{rgb}{0,0,1}
\allowdisplaybreaks[4]

\begin{document}

\title{Semi-Global Exact Prescribed-Time Stabilization of Linear Systems by Bounded Linear Time-Varying Feedback}
\author{Qinyu Xie, \quad Bin Zhou \thanks{Center for Control Theory and Guidance Technology, Harbin
Institute of Technology, Harbin, 150001, China. Email:
\texttt{binzhou@gmail.com, binzhou@hit.edu.cn}.}}

\maketitle

\begin{abstract}
This paper addresses the problem of semi-global exact prescribed-time stabilization of linear systems by bounded controls.
To solve such a problem, one needs to construct a parameterized quadratic control Lyapunov function whose corresponding level set can be made arbitrarily large (for achieving semi-global stabilization) and such that the real parts of the poles of the closed-loop system approach minus infinity as the parameter approaches infinity (for achieving prescribed-time stabilization).
{
This objective is achieved by proposing a nested parametric Lyapunov equation (PLE)-based approach, through which only two linear matrix equations need to be solved in the controller design stage without using real-time state information.
Under the proposed structural condition, the nested PLE-based approach guarantees semi-global exact prescribed-time stabilization for any prescribed bounded set of initial conditions, provided that the prescribed settling time is sufficiently large.
}
A numerical example is provided to demonstrate the effectiveness of the designed controller.
\end{abstract}
\noindent\textbf{Keywords: }
Prescribed-time stabilization,
    Semi-global stabilization,
    Time-varying feedback,
    Parametric Lyapunov equation.
% \keywords{Prescribed-time stabilization;
%     Semi-global stabilization;
%     Time-varying feedback;
%     Parametric Lyapunov equation.}

% \begin{IEEEkeywords}
% Prescribed-time stabilization;
% Semi-global stabilization;
% Time-varying feedback;
% Parametric Lyapunov equation.
% \end{IEEEkeywords}

\section{Introduction}

The primary objective of prescribed-time control is to ensure that the system state reaches and maintains the equilibrium point within a predetermined finite time (namely, finite-time stability \cite{bhat2000finite}), rather than asymptotically approaching there over an infinite duration (namely, asymptotic stability).
{
In addition to improving the convergence rate, prescribed-time control has also been studied in the presence of disturbances and uncertainties \cite{song2017time,orlov25jai}.
}
As a result, prescribed-time control has found widespread applications in advanced engineering fields that demand stringent time and accuracy requirements, for example, high-precision trajectory tracking \cite{hnlg20tc}, guidance \cite{ZZDH25TAES}, differentiator design \cite{oka21csl}, and so on.
{
Representative practical scenarios include spacecraft rendezvous under bounded thrust \cite{zhou20auto1} and prescribed-time state observation of flexible-joint robot manipulators \cite{zhou25auto}.
}

In recent years, the time-varying high-gain feedback approach has been found to be effective in solving the prescribed-time stabilization problem.
The time-varying high-gain feedback method originating from the classical fixed terminal point optimal control theory \cite{rekasius64tac} was first established in \cite{song2017time} for solving the prescribed-time stabilization problem of nonlinear systems in the normal form.
Such an approach has been extensively used for different systems and control objectives, for example, stochastic systems \cite{lk22tac,lk23tac}, multi-agent systems \cite{yws24tac}, nonlinear uncertain systems \cite{nplm23tac}, constrained nonlinear systems \cite{csx23jfi,slwc24tac}, and observer design \cite{EO24scl,holloway2019prescribed}.
Very recently, linear time-varying high-gain feedback designed from solutions to a class of parametric Lyapunov equations (PLEs) was used to solve the prescribed-time stabilization problem for general linear systems \cite{zhou20auto1,zhou20auto2} and some nonlinear systems \cite{YS25auto,zs21tac}.
{
Such controllers generally employ a time-varying gain that becomes unbounded as the prescribed settling time is approached.
For instance, the PLE-based controller in \cite{zhou20auto1} relies on a time-varying gain parameter that increases without bound when approaching the prescribed time.
At the prescribed settling time, this gain becomes singular and the corresponding feedback matrix obtained from the PLE is no longer well defined.
Moreover, a formal extension of this gain beyond the prescribed time results in parameters outside the admissible range required for the existence of the positive-definite PLE solution.
Therefore, such time-varying high-gain feedback controllers cannot be directly extended to or beyond the prescribed settling time.
Recently, the so-called time deformation approach has been established to resolve this problem \cite{orlov22auto}, which also allows certain perturbations to be fully rejected.
}

High-gain feedback will necessarily require large control effort,
{especially during the initial stage, when the state is far from the origin}, which will certainly cause actuator saturation.
The input constraints must be considered in practice since otherwise it will cause performance degradation and even instability \cite{teel92scl,hjs02tac}.
To the best of our knowledge, in most existing literature, input constraints are typically addressed qualitatively.
The impact of input constraints on the size of the domain of attraction was quantitatively studied only in a few works \cite{zhou20auto1, zhou20auto2}.
It was established in \cite{zhou20auto1} that semi-global prescribed-time stabilization of linear systems with constrained (bounded) controls is achievable if all the eigenvalues of the open-loop system are on the imaginary axis.

{
In this paper, we extend the semi-global exact prescribed-time stabilization result in \cite{zhou20auto1} to a broader class of linear systems with bounded controls, where the open-loop system is allowed to have stable poles in addition to poles on the imaginary axis.
More specifically, the following semi-global solvability property holds if and only if the system is controllable and all open-loop poles lie in the closed left-half plane.
For every prescribed bounded set of initial conditions, the stabilization problem is solvable for all sufficiently large prescribed settling times.
The necessity of this structural condition for the above semi-global solvability property follows from classical results on constrained null controllability \cite{sb80siam}: null controllability from arbitrary initial states under bounded controls requires controllability and excludes open right-half-plane eigenvalues.
The main difficulty in establishing the sufficiency lies in retaining the semi-global property under the input constraints, namely, ensuring that any prescribed bounded set of initial conditions, no matter how large, can be included in the domain of attraction through a suitable choice of the controller parameters.
In \cite{zhou20auto1}, this property is achieved by making the parameterized matrix associated with the PLE arbitrarily small.
However, when the open-loop system contains stable poles, the parameter in the PLE of \cite{zhou20auto1} is constrained by a strictly positive lower bound.
As a consequence, the associated matrix can no longer be made arbitrarily small through the mechanism used in \cite{zhou20auto1}, and the semi-global result cannot be directly extended to this more general class of systems.}

{
To overcome this difficulty, we propose a new nested PLE-based approach.
The key feature of the proposed construction is that the resulting parameterized positive definite matrix can still be made arbitrarily small even when the open-loop system contains stable poles.
This restores the essential mechanism for enlarging the domain of attraction arbitrarily under bounded controls.
Meanwhile, the constructed matrix possesses the additional high-gain and differential properties required for exact prescribed-time convergence and the corresponding stability analysis.
Based on these properties, a linear time-varying high-gain feedback controller is developed to establish the sufficiency part of the above structural characterization, showing that the stabilization problem is solvable for every prescribed bounded set of initial conditions and all sufficiently large prescribed settling times.
Therefore, compared with \cite{zhou20auto1}, the main technical advance lies in the nested PLEs construction, which preserves the key property required for semi-global stabilization in the presence of stable open-loop poles.
}

\textbf{Notation:}
For a complex number $ c $, we use $ \Re(c) $ to denote its real part.
For a square matrix  $ A $, we use  $ A^{\mathrm{T}} $,  $ \|A\| $, $ \mathrm{tr}(A) $, $ \lambda(A) $, $ \lambda_{i}(A) $, $ \det(A) $, and $ \mathrm{adj}(A) $
to denote its transpose, 2-norm, trace, eigenvalue set, the $i$-th eigenvalue, determinant, and {adjugate matrix}, respectively.
Denote $ \phi(A)=\min_{i=1,2,\ldots,n}\{ \Re(\lambda_{i}(A)) \} $ and $ \alpha(A)=\max_{i=1,2,\ldots,n}\{ \Re(\lambda_{i}(A)) \} $.
For a symmetric matrix  $ P $, we use  $ \lambda_{\max}(P) $  and  $ \lambda_{\min}(P) $ to denote respectively its maximal and minimal eigenvalues.
Moreover, if  $ P\ge 0 $, the square root of  $ P $  is denoted as  $ P^{\frac{1}{2}} $.
The standard saturation function $ \sigma: \mathbf{R}^{m} \rightarrow [-1,1]^{m} $ is defined as $ \sigma(u)=  [ \allowbreak \operatorname{sign}(u_{1})\min\{ |u_{1}|,1 \},\allowbreak \ldots, \operatorname{sign}(u_{m})\min\{ |u_{m}|,1 \} ]^{\mathrm{T}} $.
 Finally, the class $K$ function is denoted as  $ \mathcal{K} $.

\section{Problem Formulation and Preliminaries}

\subsection{Problem Formulation}

To state our problem to be solved, we first give the following definition from \cite{zhou25auto}.

\begin{definition}
\label{defefts}The nonlinear time-varying system
\begin{equation}
\dot{x}=f(t,x),\quad x\in \mathbf{R}^{n}, \label{systemnon}
\end{equation}
with $f(t,0)=0$ is said to be $T$-finite-time stable ($T$-FTS) with $\mathit{\Omega}$ being contained in the domain of attraction if it is stable in the
sense of Lyapunov, and $x(0) \in \mathit{\Omega} \Rightarrow \lim_{t\uparrow T} \Vert x(t) \Vert =0$.
If, in addition, $x(t)\neq0,\forall t\in [0,T)$ as long as $x(0)\neq0$, then it is said to be exact $T$-FTS.
\end{definition}

The notion of $T$-FTS in Definition~\ref{defefts} is closely related to the prescribed-time regulation in \cite{song2017time} and the predefined-time stability in \cite{sanchez18ima}.
In \cite{song2017time}, the convergence time is prescribed in advance, but the corresponding stability notions do not require a nonzero trajectory to remain nonzero before the prescribed terminal time.
For predefined-time stability \cite{sanchez18ima}, the predefined time $T_c$ is a uniform upper bound on the settling-time function; in its strong version, $T_c$ is the least upper bound, i.e., $\sup_{x_0}T(x_0)=T_c$, while individual trajectories may settle earlier.
In contrast, exact $T$-FTS additionally requires that every nonzero trajectory in the considered domain remain nonzero for all $t<T$ and converge to zero as $t\uparrow T$.
Thus, the prescribed time $T$ is the exact convergence instant for every nonzero initial condition in the considered domain.

{
Consider the linear system
\begin{equation}
\dot{x}=Ax+Bu, \label{system}
\end{equation}
where $ x\in\mathbf{R}^{n} $ is the state and
$ u \in \mathbf{R}^{m} $ is the control input.
}
The problem to be solved for the above system is formulated as follows.

\begin{problem} \label{prob1} (semi-global exact prescribed-time stabilization)
Let $ \mathit{\Omega}\subset\mathbf{R}^{n} $ be a prescribed set that can be arbitrarily large yet bounded, and $ T>0 $ be a prescribed constant.
{
Find a controller $U(t,x(t))$, $0\le t<T$, such that, with the saturated control input $ u(t)=\sigma(U(t,x(t))) $, the closed-loop system is exact $T$-FTS in the sense of Definition \ref{defefts}, with $\mathit{\Omega}$ being contained in the domain of attraction.
}
\end{problem}

\subsection{An Assumption and Discussions}

In order to solve Problem \ref{prob1}, the following assumption is imposed on the system.
\begin{assumption} \label{ass1}
    $(A,B)$ is controllable and all eigenvalues of  $ A $  are located in the closed left-half plane.
    \label{assump1}
\end{assumption}

The above assumption is necessary.
On the one hand, if $ (A,B) $ is only stabilizable and $ A $ has some stable eigenvalues that are uncontrollable, then the state trajectories corresponding to such a stable eigenvalue will converge to zero exponentially no matter how the controller is designed.
On the other hand, if $A$ contains eigenvalues having positive real part, then the system is only locally stabilizable by any feedback (see, for example, \cite{hjs02tac,sb80siam,teel92scl}).

Problem \ref{prob1} is not always solvable even if Assumption \ref{ass1} is fulfilled.
This is because, in the presence of input constraints, there is a trade-off between the settling time and the size of the domain of attraction, say, the larger the size of the domain of attraction, the longer the settling time, which is reasonable.
To make this clear, we present the following simple lemma whose proof is given in Appendix A.

\begin{lemma} \label{lm1}
Consider the following scalar linear system
\begin{equation}
    \dot{x}=- \alpha_{0} x+u,\ |u|\le 1,\; x \in \mathbf{R}, \;u \in \mathbf{R}, \;x(0)=x_{0}, \label{sys2}
\end{equation}
where $ \alpha_{0} \geq 0 $ is a constant.
Let  $ \mathit{\Omega}=\{ x\ |\ \vert x \vert \le D \} $ where $ D $ is a positive constant, and $T^{\ast}(\mathit{\Omega})=\inf\{ T:\ \forall x_{0} \in \mathit{\Omega},\ \exists u\in \mathbf{R}, \text{such that} \; x(t)=0,\ \forall t \ge T \}$.
Then
\begin{equation}
T^{\ast }(\mathit{\Omega })=\left \{
\begin{array}{ll}
\frac{1}{\alpha_{0} }\ln (1+\alpha_{0} D), & \alpha_{0} >0, \\
D, & \alpha_{0} =0.%
\end{array}%
\right. \label{eqzb2}
\end{equation}
\end{lemma}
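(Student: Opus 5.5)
The plan is to show two inequalities: no admissible control drives every $x_0\in\mathit{\Omega}$ to zero before the claimed time, and a specific admissible control does reach zero exactly at that time. Write $T_0$ for the right-hand side of (\ref{eqzb2}).

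\textbf{Reachable time for a single initial state.} Fix $x_0$ and use variation of constants:
\begin{equation*}
x(t)=e^{-\alpha_0 t}x_0+\int_0^t e^{-\alpha_0(t-s)}u(s)\,ds .
\end{equation*}
Then $x(t)=0$ holds if and only if
\begin{equation*}
x_0=-\int_0^t e^{\alpha_0 s}u(s)\,ds .
\end{equation*}
Because $|u|\le 1$, this gives the necessary condition
\begin{equation*}
|x_0|\le g(t):=\int_0^t e^{\alpha_0 s}\,ds ,
\end{equation*}
where $g(t)=(e^{\alpha_0 t}-1)/\alpha_0$ if $\alpha_0>0$ and $g(t)=t$ if $\alpha_0=0$. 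The function $g$ is continuous, strictly increasing, and satisfies $g(0)=0$, so the minimal time to reach zero from $x_0$ is $t(x_0)=g^{-1}(|x_0|)$. This time is attained by the bang-bang control $u=-\operatorname{sign}(x_0)$, for which the equality above holds exactly at $t=g^{-1}(|x_0|)$.

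\textbf{Lower bound.} Take the extreme initial state $x_0=D\in\mathit{\Omega}$. Any $T$ in the set defining $T^{\ast}$ must satisfy $x(T)=0$ for this state, hence $D\le g(T)$ and $T\ge g^{-1}(D)$. Solving $(e^{\alpha_0 T}-1)/\alpha_0=D$ gives $g^{-1}(D)=\frac{1}{\alpha_0}\ln(1+\alpha_0 D)$ when $\alpha_0>0$, and $g^{-1}(D)=D$ when $\alpha_0=0$. So $T^{\ast}(\mathit{\Omega})\ge T_0$.

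\textbf{Upper bound.} For any $x_0\in\mathit{\Omega}$, apply $u=-\operatorname{sign}(x_0)$ until $x$ first hits zero, which happens at $g^{-1}(|x_0|)\le g^{-1}(D)$, and set $u=0$ afterwards. The state then stays at zero for all later times, since $x\equiv 0$ with $u\equiv 0$ solves the equation. Hence $T_0$ belongs to the set in the definition of $T^{\ast}$, and therefore $T^{\ast}(\mathit{\Omega})\le T_0$. Together with the lower bound this proves (\ref{eqzb2}).

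The only point needing care is reading the definition of $T^{\ast}$ correctly: the existence of $u$ in it means an admissible control signal $u(\cdot)$ with $|u|\le1$ chosen per initial state. One must also check that the infimum is attained, which follows from monotonicity and continuity of $g$. Beyond that, the argument reduces to the estimate $\bigl|\int_0^t e^{\alpha_0 s}u(s)\,ds\bigr|\le g(t)$.
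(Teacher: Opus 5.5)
Your proof is correct, but it takes a different route from the paper. The paper treats the problem as minimum-time optimal control and applies Pontryagin's Minimum Principle. The costate $\lambda(t)=\lambda_0e^{\alpha_0 t}$ never changes sign, so the candidate optimal control is the constant $u^*=-\mathrm{sgn}(\lambda_0)$. A sign argument then selects $u^*=1$ for $x_0<0$, which gives $t_f=\frac{1}{\alpha_0}\ln(1-\alpha_0x_0)$, and the paper takes the worst case over $|x_0|\le D$.

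You instead work directly with the variation-of-constants formula, which reduces everything to the estimate $\bigl|\int_0^t e^{\alpha_0 s}u(s)\,\mathrm{d}s\bigr|\le g(t)$. This gives the lower bound for every admissible control at once. It needs neither the existence of an optimal control nor any necessary-condition reasoning, whereas the paper's step ``to minimize $t_f$ we should minimize $\dot{x}$'' is heuristic. The bang-bang control then shows the bound is attained.

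Your version also covers $\alpha_0=0$ uniformly through $g(t)=t$; the paper writes out only the $\alpha_0>0$ formula. In addition, you explicitly check the requirement ``$x(t)=0$ for all $t\ge T$'' in the definition of $T^{\ast}(\mathit{\Omega})$ by switching to $u=0$ after the hitting time, which the paper leaves implicit.

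What the PMP route offers is a general method for identifying the structure of time-optimal controls when the reachable set cannot be read off from a scalar integral. For this scalar lemma, your argument is the more self-contained and rigorous of the two.
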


It follows from the above lemma that Problem \ref{prob1} is solvable for system (\ref{sys2}) if and only if $ T \geq T^{*}(\mathit{\Omega}) $.
{
For a general system, however, an exact characterization of the minimum feasible settling time is not pursued in this paper.
Instead, our objective is to characterize the system class for which, for every prescribed bounded set $\mathit{\Omega}$, Problem~\ref{prob1} is guaranteed to be solvable for all sufficiently large prescribed settling times, and to provide constructive sufficient bounds on such settling times.
Moreover, $T^{*}(\mathit{\Omega})$ is strictly increasing with respect to the radius $D$ of the prescribed domain, since $\mathrm{d}T^{*}/\mathrm{d}D=1/(1+\alpha_{0}D)>0$ for $\alpha_{0}>0$, whereas $\mathrm{d}T^{*}/\mathrm{d}D=1$ for $\alpha_{0}=0$.
Hence, enlarging the prescribed domain necessarily increases the minimum admissible settling time under the bounded input.
}

\subsection{Existing Solution}

Problem \ref{prob1} was solved in our early paper \cite{zhou20auto1} under the following assumption:
\begin{assumption} \label{ass2}
    $ (A,B) $ is controllable and all eigenvalues of  $ A $  are on the imaginary axis.
\end{assumption}

The main difference between Assumption \ref{ass1} and Assumption \ref{ass2} is that the former one allows $A$ to have stable eigenvalues (that are controllable).
Under Assumption \ref{ass2}, the controller designed in \cite{zhou20auto1} is a linear time-varying high-gain feedback in the form of
\begin{equation*}
u(t)=-B^{\mathrm{T}}P_{0}(\gamma(t))x(t), \quad t \in [0,T), \label{equ1}
\end{equation*}
where $\gamma(t)=T\gamma_{0}/(T-t)$, $\gamma_{0}$ is a sufficiently small number so that the domain of attraction of the closed-loop system contains $\mathit{\Omega}$, $T>T^{\ast}$, where $T^{\ast}$ is some constant depending on $\mathit{\Omega}$, and $P_{0}$ satisfies the following PLE
\begin{equation}
    A^{\mathrm{T}}P_{0}+P_{0}A-P_{0}BB^{\mathrm{T}}P_{0}=-\gamma P_{0}.
    \label{PLE}
\end{equation}
Such a result heavily relies on properties of the solution to the above PLE, as recalled in the following lemma.

\begin{lemma} \cite{zhou20auto1} \label{lm2}
Let $(A,B)$ be controllable.
Then the PLE (\ref{PLE}) has a unique positive definite solution if and only if
\begin{equation}
\gamma>-2\phi(A),
\label{eqgamma}
\end{equation}
and the unique solution is given by $P_{0}(\gamma)=W^{-1}(\gamma)$ where $W(\gamma)$ is a rational matrix function satisfying
\begin{equation*}
     \left( A+\frac{\gamma}{2}I_{n} \right) W+W\left( A+\frac{\gamma}{2}I_{n} \right) ^{\mathrm{T}}=BB^{\mathrm{T}}.
\end{equation*} Moreover, $\lambda_{i}(A-BB^{\mathrm{T}}P_{0})=-\lambda_{i}(A)-\gamma,i=1,2,\ldots,n$.
If in addition all eigenvalues of $A$ are on the imaginary axis, then $\lim_{\gamma \downarrow 0} P_{0}(\gamma)=0$.
\end{lemma}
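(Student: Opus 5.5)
The plan is to reduce the PLE (\ref{PLE}) to a standard Lyapunov equation by working with the inverse of the unknown. First I will establish existence, uniqueness and the stated formula. If $P_{0}>0$ solves (\ref{PLE}), then pre- and post-multiplying by $W=P_{0}^{-1}$ gives
\begin{equation*}
WA^{\mathrm{T}}+AW-BB^{\mathrm{T}}=-\gamma W,
\end{equation*}
which is exactly $(A+\frac{\gamma}{2}I_{n})W+W(A+\frac{\gamma}{2}I_{n})^{\mathrm{T}}=BB^{\mathrm{T}}$. Conversely, every positive definite $W$ solving this linear equation yields a positive definite solution $P_{0}=W^{-1}$ of (\ref{PLE}). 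The equivalent form $(-A-\frac{\gamma}{2}I_n)W+W(-A-\frac{\gamma}{2}I_n)^{\mathrm{T}}=-BB^{\mathrm{T}}$ is a Lyapunov equation in the matrix $-A-\frac{\gamma}{2}I_n$. If $\gamma>-2\phi(A)$, every eigenvalue of this matrix has real part $-\Re\lambda_i(A)-\gamma/2<0$. So it is Hurwitz, and the solution is unique and equal to
\begin{equation*}
W=\int_{0}^{\infty}e^{-(A+\frac{\gamma}{2}I)s}BB^{\mathrm{T}}e^{-(A+\frac{\gamma}{2}I)^{\mathrm{T}}s}\,\mathrm{d}s .
\end{equation*}
Controllability of $(A,B)$, which is the same as controllability of $(-A-\frac{\gamma}{2}I,B)$, makes this $W$ positive definite. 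For the necessity direction, suppose $W>0$ solves the equation. Take a left eigenvector $v$ of $A$ for an eigenvalue $\lambda$ with $\Re\lambda=\phi(A)$, so that $v^{*}A=\lambda v^{*}$. Then $v^{*}(A+\frac{\gamma}{2}I)=(\lambda+\frac{\gamma}{2})v^{*}$, and multiplying the equation by $v^{*}$ on the left and $v$ on the right gives $(2\Re\lambda+\gamma)v^{*}Wv=\|B^{\mathrm{T}}v\|^{2}$. The right-hand side is positive by the PBH test, and $v^{*}Wv>0$, so $\gamma>-2\phi(A)$. The uniqueness of $P_{0}$ follows from the uniqueness of $W$. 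Rationality in $\gamma$ follows from Cramer's rule applied to the vectorized system $(I\otimes(A+\frac{\gamma}{2}I)+(A+\frac{\gamma}{2}I)\otimes I)\mathrm{vec}(W)=\mathrm{vec}(BB^{\mathrm{T}})$.

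Next comes the eigenvalue claim. Using $P_{0}=W^{-1}$, I get $A-BB^{\mathrm{T}}P_{0}=(AW-BB^{\mathrm{T}})W^{-1}$. The $W$-equation gives $AW-BB^{\mathrm{T}}=-WA^{\mathrm{T}}-\gamma W$. Hence
\begin{equation*}
A-BB^{\mathrm{T}}P_{0}=W(-A^{\mathrm{T}}-\gamma I)W^{-1},
\end{equation*}
which is similar to $-A^{\mathrm{T}}-\gamma I$. Its eigenvalues are therefore $-\lambda_i(A)-\gamma$.

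The last claim is the limit $P_{0}(\gamma)\to0$ as $\gamma\downarrow0$ when all eigenvalues of $A$ are imaginary, and I expect it to be the main obstacle. It is equivalent to $\lambda_{\min}(W(\gamma))\to\infty$. Here $\phi(A)=0$, so every $\gamma>0$ is admissible. At $\gamma=0$ the linear operator $X\mapsto AX+XA^{\mathrm{T}}$ is singular, so the solution should blow up. However, blow-up of some entries is not enough; I must show $v^{*}Wv\to\infty$ for every unit vector $v$.

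I would first reduce to the case of $A$ in real Jordan form with $B$ transformed accordingly. A cleaner route is the integral representation, where $\gamma>0$ makes $-A-\frac{\gamma}{2}I$ Hurwitz:
\begin{equation*}
v^{\mathrm{T}}Wv=\int_{0}^{\infty}e^{-\gamma s}\,\|B^{\mathrm{T}}e^{-A^{\mathrm{T}}s}v\|^{2}\,\mathrm{d}s .
\end{equation*}
By monotone convergence this tends to $\int_{0}^{\infty}\|B^{\mathrm{T}}e^{-A^{\mathrm{T}}s}v\|^{2}\mathrm{d}s$. That integral is $+\infty$ for every $v\neq0$. The reason is that all eigenvalues of $A$ are imaginary, so $B^{\mathrm{T}}e^{-A^{\mathrm{T}}s}v$ is a nonzero quasi-polynomial (nonzero by controllability) whose exponentials are all purely oscillatory. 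Such a function is almost periodic times polynomial and cannot be square integrable on $[0,\infty)$. To get a uniform conclusion over the unit sphere, I would note that $\gamma\mapsto v^{\mathrm{T}}W(\gamma)v$ is monotone decreasing in $\gamma$, so $\lambda_{\min}(W(\gamma))$ is nonincreasing in $\gamma$. A Dini-type compactness argument on the unit sphere then gives $\lambda_{\min}(W(\gamma))\to\infty$, and hence $\|P_{0}(\gamma)\|\to0$.
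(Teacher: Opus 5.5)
The paper does not prove this lemma; it is quoted from \cite{zhou20auto1}, so there is no in-paper proof to match. Your proof is correct. For the existence, uniqueness, positivity and spectrum claims, you follow the same pattern the paper uses for the nested PLEs in the proof of Item~1 of Theorem~\ref{theoP}. You pass to $W=P_0^{-1}$ to get a Lyapunov equation, and you use that $-A-\frac{\gamma}{2}I_n$ is Hurwitz together with controllability. You then read the closed-loop spectrum off a similarity relation extracted from the PLE. Your left-eigenvector/PBH argument for necessity and the Cramer's-rule argument for rationality supply details that the paper leaves to ``Lyapunov equation theory''.

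For $\lim_{\gamma\downarrow0}P_0(\gamma)=0$ your route is valid, but one step is only asserted: that $\int_0^\infty\|B^{\mathrm{T}}e^{-A^{\mathrm{T}}s}v\|^2\mathrm{d}s=\infty$ for every $v\neq0$. The function is really a sum $\sum_k p_k(s)e^{\mathrm{i}\omega_k s}$ rather than ``almost periodic times polynomial'', and the non-integrability needs an argument. Here is a quick one.
\begin{itemize}
\item The set of $v\in\mathbf{C}^n$ with finite integral is a subspace. It is invariant under $e^{-A^{\mathrm{T}}\tau}$ for $\tau\ge0$ (shift the integration variable), hence under $A^{\mathrm{T}}$.
\item If this subspace were nonzero, it would contain an eigenvector $A^{\mathrm{T}}w=\mu w$ with $\Re\mu=0$. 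For such $w$ the integrand is the constant $\|B^{\mathrm{T}}w\|^2$.
\item A finite integral then forces $B^{\mathrm{T}}w=0$, contradicting the PBH test.
\end{itemize}
With this filled in, monotone convergence and your Dini step finish the proof.

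There is also a shorter, purely algebraic route, close in spirit to the trace computation in Item~3 of Theorem~\ref{theoP}. It avoids both the quasi-polynomial asymptotics and the compactness step.
\begin{itemize}
\item Your integral shows $P_0(\gamma)$ is nondecreasing in $\gamma$, so $P_0(\gamma)\downarrow P_\ast\ge0$ as $\gamma\downarrow0$.
\item Taking traces in your similarity relation gives $\mathrm{tr}(B^{\mathrm{T}}P_0B)=n\gamma+2\mathrm{tr}(A)=n\gamma$, since $\mathrm{tr}(A)=0$ for an imaginary spectrum. Hence $P_\ast B=0$.
\item The limiting PLE then reduces to $A^{\mathrm{T}}P_\ast+P_\ast A=0$. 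This gives $P_\ast A^kB=(-1)^k(A^{\mathrm{T}})^kP_\ast B=0$ for all $k$, so $P_\ast=0$ by controllability.
\end{itemize}
In exchange, your Gramian viewpoint makes the paper's central obstruction visible. With a stable eigenvalue of $A$, the weight $e^{-\gamma s}$ must dominate growing modes of $e^{-A^{\mathrm{T}}s}$. That is exactly the restriction $\gamma>-2\phi(A)$ that motivates the nested PLEs.
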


The solution proposed in \cite{zhou20auto1} relies on essentially finding a control Lyapunov function $x^{\mathrm{T}}P(\gamma)x$, parameterized by $\gamma=\gamma(t)$ which is an increasing function of $t$ and {approaches infinity as $t$ approaches $T$}, such that
\begin{equation}
\lim _{\gamma  \downarrow 0} P(\gamma )\!\!=\!\!0, \lim _{\gamma \rightarrow \infty} \Re \{\lambda_{i}(A\!\!-\!\!BB^{\mathrm{T}}P(\gamma))\}\!\!=\!\!-\infty.
\label{eqtwocond}
\end{equation}
The first equation in (\ref{eqtwocond}) guarantees that the domain of attraction can be made arbitrarily large by decreasing $\gamma_{0}=\gamma(0)$, and the second equation in (\ref{eqtwocond}) guarantees the $T$-FTS by allowing $\gamma=\gamma(t)$ to {approach infinity as $t$ approaches $T$}.
It follows from Lemma \ref{lm2} that equation (\ref{eqtwocond}) is satisfied with $P=P_{0}$ under Assumption \ref{ass2}.

Unfortunately, equation (\ref{eqtwocond}) cannot be satisfied with $P=P_{0}$ if Assumption \ref{ass1} is fulfilled but Assumption \ref{ass2} is not (namely, $A$ has at least one stable eigenvalue), since, by virtue of (\ref{eqgamma}), the parameter $\gamma$ should be larger than the positive constant $-2\phi(A)$, and thus cannot satisfy the first equation in (\ref{eqtwocond}), namely, it cannot be made arbitrarily small so that the domain of attraction of the closed-loop system contains the prescribed set $\mathit{\Omega}$.

% In this paper, we will present a complete solution to Problem \ref{prob1} under Assumption \ref{ass1}.
{
In this paper, we will show that, under Assumption~\ref{ass1}, for every prescribed bounded set $\mathit{\Omega}$ there exists a finite settling-time threshold such that Problem~\ref{prob1} admits a constructive solution for every prescribed time above this threshold.
}
To this end, we will establish a new nested PLE-based approach to construct a control Lyapunov function $x^{\mathrm{T}}P(\gamma)x$ such that these two conditions in equation (\ref{eqtwocond}) are satisfied.

\section{Main Result}

\subsection{Nested PLE-Based Approach}

In this subsection, we present a nested PLE-based approach to construct a positive definite matrix $P(\gamma)$ parameterized by $\gamma$ such that these two conditions in equation (\ref{eqtwocond}) are satisfied, and, at the same time, establish some further properties of $P(\gamma)$ that will also be very important in the stability analysis of the closed-loop system.
{
The two PLEs are called ``nested'' because the solution of the first one is used as a known quantity in the second one. Hence, they are solved sequentially rather than simultaneously as mutually coupled equations.
}
Hereafter, we omit the dependence of $ \gamma $  in variables, for example, $ P=P(\gamma) $.

\begin{theorem} \label{theoP}
Assume that $(A,B)$ satisfies Assumption \ref{ass1}.
Consider the following two nested PLEs
\begin{gather}
    A^{\mathrm{T}}X+XA+XBB^{\mathrm{T}}X= \gamma X,   \label{ple0}\\
    ( A\!\!+\!\!BB^{\mathrm{T}}X)^{\mathrm{T}}Y\!\!+\!\!Y( A\!\!+\!\!BB^{\mathrm{T}}X)\!\!-\!\!YBB^{\mathrm{T}}Y\!\!=\!\!- \gamma Y,  \label{ple1}
\end{gather}
and define $P=Y-X$. Then the following holds.
\begin{enumerate}
\item The PLE \eqref{ple0} and \eqref{ple1} have a unique positive definite solution $ X $ and $ Y $ for any $\gamma >0$, respectively,
and $P$ is a rational matrix function of $\gamma$ and such that
\begin{equation}
\lambda_{i}(A-BB^{\mathrm{T}}P)=\lambda_{i}(A)-2\gamma,i=1,2,\ldots,n.  \label{poles}
\end{equation}
\item $P$  is positive definite for any $\gamma > 0 $.
\item $\mathrm{tr}( B^{\mathrm{T}}PB)=2n\gamma$.
\item There exists a $k>0$ such that the following Lyapunov inequality holds for any $\gamma >0$:
\begin{equation}
    -k (\gamma-\phi(A)) P \!\le\! A^{\mathrm{T}}P\!+\!PA\!-\!PBB^{\mathrm{T}}P \!\le \!- \gamma P. \label{pli}
\end{equation}
\item There exist positive constants $ c_{1} $  and  $ c_{2} $  such that
\begin{equation*}
   \lim_{\gamma \to \infty}\frac{\lambda_{\max}(P)}{\gamma^{2\mu_{\mathrm{c}}-1}}=c_{1},\; \lim_{\gamma \downarrow 0}\frac{\lambda_{\max}(P)}{\gamma}=c_{2},\label{eigmaP}
\end{equation*}
where $ \mu_{\mathrm{c}} $ is the controllability index of $(A,B)$.

{
\item There exist a constant $\delta_{\mathrm c}>0$ and a positive scalar function $f(\gamma)$ such that
\begin{equation}
    \frac{P}{2n\gamma}
    <
    \frac{\mathrm{d}P}{\mathrm{d}\gamma}
    \leq
    \frac{f(\gamma)P}{2n\gamma}
    \leq
    \frac{\delta_{\mathrm{c}}P}{2n\gamma},
    \quad \forall\,\gamma>0,
    \label{defdc}
\end{equation}
where $ f(\gamma)=\min\{ \delta_{\mathrm{c}},\overline{\delta}_{\mathrm{c}} \} $,
\begin{equation*}
    \overline{\delta}_{\mathrm{c}}=
    \frac{1}{n}\mathrm{tr}\big(H\big) \!+\!
    \sqrt{ \frac{n-1}{n} \left( \mathrm{tr}\big(H^{2}\big) - \frac{1}{n} \big(\mathrm{tr}(H)\big)^{2}
    \right)},
\end{equation*}
with $ H=2n\gamma P^{-\frac{1}{2}}\frac{\mathrm{d}P}{\mathrm{d}\gamma}P^{-\frac{1}{2}}$.
}

\end{enumerate}
\end{theorem}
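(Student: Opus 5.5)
The plan is to reduce both nested PLEs to the single-PLE setting of Lemma~\ref{lm2}, and then exploit identities linking $X$, $Y$ and $P=Y-X$.

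\textbf{Items 1--3.} Rewrite \eqref{ple0} as $(-A)^{\mathrm T}X+X(-A)-XBB^{\mathrm T}X=-\gamma X$, which is \eqref{PLE} with $A$ replaced by $-A$. Under Assumption~\ref{ass1}, $\phi(-A)=-\alpha(A)\ge 0$, so \eqref{eqgamma} reads $\gamma>2\alpha(A)$ and holds for every $\gamma>0$. Lemma~\ref{lm2} then gives a unique $X>0$ with $\lambda_i(A+BB^{\mathrm T}X)=-\lambda_i(A)+\gamma$. Hence $A_1:=A+BB^{\mathrm T}X$ satisfies $\phi(A_1)\ge\gamma>0$, and Lemma~\ref{lm2} applied to \eqref{ple1} gives a unique $Y>0$ with $\lambda_i(A_1-BB^{\mathrm T}Y)=\lambda_i(A)-2\gamma$. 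Since $A-BB^{\mathrm T}P=A_1-BB^{\mathrm T}Y$, this is \eqref{poles}. Rationality in $\gamma$ follows from $X^{-1},Y^{-1}$ being rational solutions of Lyapunov equations.

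For item~2, use the inverse forms from Lemma~\ref{lm2} and set $F=A_1+\frac{\gamma}{2}I_n$. These are
\[
(A_1-\tfrac{\gamma}{2}I_n)X^{-1}+X^{-1}(A_1-\tfrac{\gamma}{2}I_n)^{\mathrm T}=BB^{\mathrm T},\qquad FY^{-1}+Y^{-1}F^{\mathrm T}=BB^{\mathrm T}.
\]
Subtracting gives $F(X^{-1}-Y^{-1})+(X^{-1}-Y^{-1})F^{\mathrm T}=2\gamma X^{-1}>0$. Because $-F$ is Hurwitz, this forces $X^{-1}>Y^{-1}$, i.e., $P>0$.

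For item~3, multiply each PLE by the inverse of its solution and take traces. This gives $\mathrm{tr}(B^{\mathrm T}XB)=n\gamma-2\,\mathrm{tr}A$ and $\mathrm{tr}(B^{\mathrm T}YB)=2\,\mathrm{tr}A_1+n\gamma$. Using $\mathrm{tr}A_1=\mathrm{tr}A+\mathrm{tr}(B^{\mathrm T}XB)$, we get $\mathrm{tr}(B^{\mathrm T}PB)=2n\gamma$.

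\textbf{Item 4.} Subtracting \eqref{ple0} from \eqref{ple1} and expanding $PBB^{\mathrm T}P=(Y-X)BB^{\mathrm T}(Y-X)$ yields the exact identity
\[
A^{\mathrm T}P+PA-PBB^{\mathrm T}P=-\gamma(X+Y)=-\gamma P-2\gamma X .
\]
The upper bound in \eqref{pli} is immediate. The lower bound is equivalent to $2\gamma X\le (k(\gamma-\phi(A))-\gamma)P$, and it suffices to show that $\gamma X\le c\,(\gamma-\phi(A))P$ for some uniform $c$. I would prove this in three regimes.
\begin{itemize}
\item On any compact interval $[\varepsilon,M]$, the bound holds by continuity and positivity of $P$.
\item As $\gamma\to\infty$, apply a feedback and state transformation to Brunovsky form. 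The scaling structure of the PLE solutions, $P(\gamma)\approx\gamma L_\gamma P_\ast L_\gamma$ with $L_\gamma=\mathrm{diag}(\gamma^{-j})$ blockwise, applies to both $X$ and $P$. This shows $X$ and $P$ are comparable, with the $A$-dependent terms of lower order.
\item As $\gamma\downarrow0$, $X$ tends to a finite semidefinite limit. By item~5, $P\sim\gamma P'(0)$ with $P'(0)>0$, so $\gamma X\le c\gamma P/\gamma=cP$. Here the term $-\phi(A)\ge0$ supplies the needed slack.
\end{itemize}
If some eigenvalues of $A$ are imaginary, $X\to$ a singular limit. The bound still holds because only $\gamma X\to0$ relative to $P$ is needed.

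\textbf{Item 5.} The $\gamma\to\infty$ limit follows from the same Brunovsky scaling: the largest block exponent is $\mu_{\mathrm c}$, giving $\lambda_{\max}(P)\sim c_1\gamma^{2\mu_{\mathrm c}-1}$. For $\gamma\downarrow0$, I would show from the Lyapunov identity of item~2 that $X^{-1}-Y^{-1}$ stays bounded away from singular while $X^{-1}-Y^{-1}=O(\gamma)$ in the right sense. Equivalently, I would show $P(0)=0$ and compute $P'(0)$ by differentiating, then check $P'(0)>0$ via a Lyapunov equation with an anti-Hurwitz coefficient.

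\textbf{Item 6 (expected main obstacle).} The final inequality $\frac{f(\gamma)P}{2n\gamma}\le\frac{\delta_{\mathrm c}P}{2n\gamma}$ is trivial from $f=\min\{\delta_{\mathrm c},\overline\delta_{\mathrm c}\}$. The inequality $\frac{\mathrm dP}{\mathrm d\gamma}\le\frac{\overline\delta_{\mathrm c}P}{2n\gamma}$ is the classical bound $\lambda_{\max}(H)\le \frac{1}{n}\mathrm{tr}H+\sqrt{\frac{n-1}{n}\bigl(\mathrm{tr}H^2-\frac1n(\mathrm{tr}H)^2\bigr)}$ for symmetric $H$, combined with $\frac{\mathrm dP}{\mathrm d\gamma}\le\lambda_{\max}(H)\frac{P}{2n\gamma}$. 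The existence of a uniform $\delta_{\mathrm c}$ with $H\le\delta_{\mathrm c}I$ follows from boundedness of $H$ on compacts plus the two asymptotic regimes, since $P$ behaves like a rational power of $\gamma$ at both ends. The hard part is the strict lower bound $H>I$, i.e., $\frac{\mathrm d}{\mathrm d\gamma}\bigl(\gamma^{-1/(2n)}P\bigr)>0$. I would first try differentiating the identity of item~4 in $\gamma$. This gives a Lyapunov equation for $P'$ with closed-loop matrix $A-BB^{\mathrm T}P$, which is Hurwitz with margin $2\gamma$, and right-hand side $X+Y+2\gamma X'+\gamma Y'$. I would then show $X'\ge0$ and $Y'$ of controlled sign via their own differentiated PLEs, mirroring the monotonicity argument for $P_0$ in \cite{zhou20auto1}. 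Finally I would compare with $P/(2n\gamma)$ using item~3 and the trace identity $\mathrm{tr}(B^{\mathrm T}P'B)=2n$.
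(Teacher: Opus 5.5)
\textbf{Verdict.} Items 1--3 are correct, and Items 4--6 contain a genuine gap. Your proof of Item 2 from $F(X^{-1}-Y^{-1})+(X^{-1}-Y^{-1})F^{\mathrm T}=2\gamma X^{-1}$, with $-F$ Hurwitz, is a neat and more elementary substitute for the paper's appeal to the stabilizing solution of \eqref{ARE}.

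\textbf{The main gap: small $\gamma$ in Items 4 and 5.} First, the claim $P\sim\gamma P'(0)$ with $P'(0)>0$ is false as soon as an imaginary-axis eigenvalue of $A$ has a Jordan block of size at least two. Item 5, which only concerns $\lambda_{\max}(P)$, could not give it anyway. Take the double integrator ($A$ the $2\times2$ nilpotent Jordan block, $B=[0\ 1]^{\mathrm T}$) and $L=\mathrm{diag}(\gamma,1)$. Then $\gamma^{-1}LAL^{-1}=A$ and $LB=B$, so both PLEs are exactly scale invariant: $X(\gamma)=\gamma LX(1)L$ and $P(\gamma)=\gamma LP(1)L$. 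Hence $P_{11}=\gamma^{3}P_{11}(1)$, $\lambda_{\min}(P)=O(\gamma^{3})$, and $P'(0)=\mathrm{diag}(0,P_{22}(1))$ is singular. The paper's example behaves the same way, with $P_{11}=2\gamma^{3}p_{11}$. So the lower bound $P\ge c\gamma I_n$ on which your estimate $\gamma X\le cP$ rests fails.

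Second, when $\phi(A)=0$ there is no slack. The required estimate is then $X\le cP$, i.e., $\lambda_{\min}(YX^{-1})$ bounded away from $1$. This compares two matrices that vanish at the same direction-dependent rates, so ``only $\gamma X\to0$ relative to $P$'' is not what is needed. As written, your argument covers only the case $\phi(A)<0$ with all imaginary-axis eigenvalues semisimple.

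Your Item 5 plan at $\gamma\downarrow0$ has the same defect. The Lyapunov equations degenerate at $\gamma=0$ ($X^{-1}$ and $Y^{-1}$ blow up along imaginary-axis modes), and only $P'(0)\neq0$, not $P'(0)>0$, is true.

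\textbf{Missing ingredient.} What is needed is the paper's Jordan-form scaling lemma. Take $\mathcal{L}=\gamma^{-1/2}I_{p_0}$ on the Hurwitz block and $\mathrm{diag}(\gamma^{k-1},\ldots,1)$ on each imaginary-axis Jordan block of size $k$. Then $\gamma^{-1}\mathcal{L}^{-1}X\mathcal{L}^{-1}\to\varXi$ and $\gamma^{-1}\mathcal{L}^{-1}Y\mathcal{L}^{-1}\to\varTheta$, both block diagonal and positive definite, with $\varTheta_{00}=\varXi_{00}$ but $\varTheta_{ii}>\varXi_{ii}$ for $i\ge1$. The strict inequality is proved through Lyapunov equations for $X-XY^{-1}X$ and $XY^{-1}X$. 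Also $\gamma^{-1}\mathscr{L}^{-1}P\mathscr{L}^{-1}\to\varPhi>0$ with $\mathscr{L}_0=I_{p_0}$. Then $\lambda_{\min}(YX^{-1})\to\lambda_{\min}(\varTheta\varXi^{-1})>1$ settles the case $\phi(A)=0$. When $\phi(A)<0$, $\gamma^{1/2}\mathcal{L}\mathscr{L}^{-1}\to I_{p_0}\oplus0$ bounds $\lambda_{\max}(\gamma XP^{-1})$.

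\textbf{The regime $\gamma\to\infty$.} You cannot use a feedback transformation here, since the PLE solutions change under $A\mapsto A+BK$. Only state and input changes to Luenberger form are admissible. The rest of $A$ then drops out because $\gamma^{-1}LAL^{-1}\to\varPsi$, with $L$ carrying the positive powers $\gamma^{\mu_i-l}$ rather than $\gamma^{-j}$. You must also show that the scaled limit of the difference $Y-X$ is positive definite, not merely that $X$ and $Y$ scale alike. Your own Item 2 identity, passed to the scaled limit, does this; the paper uses \eqref{eqPiInf} instead.

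\textbf{The strict lower bound in Item 6.} Differentiating \eqref{ARE} with the unshifted matrix $A-BB^{\mathrm T}P$ leaves the unknown $\gamma P'$ (hidden in $\gamma Y'$) on the right-hand side, so the argument does not close. The paper absorbs this term with $A_{\mathrm c}=A-BB^{\mathrm T}P+\frac{\gamma}{2}I_n$, which is still Hurwitz. Combining \eqref{dP1} and \eqref{dP2} gives a Lyapunov equation for $2n\gamma P'-P$ with right-hand side $-2n\gamma(X+Y)-4n\gamma^{2}X'+PBB^{\mathrm T}P+2\gamma X$. The bound $PBB^{\mathrm T}P\le\mathrm{tr}(B^{\mathrm T}PB)P=2n\gamma(Y-X)$ cancels the $Y$ terms and leaves $(2-4n)\gamma X-4n\gamma^{2}X'<0$. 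So only monotonicity of $X$ is needed; you need no sign information on $Y'$ and no use of $\mathrm{tr}(B^{\mathrm T}P'B)$.

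\textbf{The constant $\delta_{\mathrm c}$.} This needs no asymptotics, and in any case $P$ does not behave like a single power of $\gamma$. Since $H>I_n$, we have $\lambda_{\max}(H)\le\mathrm{tr}(H)=2n\gamma\frac{\mathrm d}{\mathrm d\gamma}\ln\det P$. This is a rational function, continuous on $(0,\infty)$, with finite limits at $0$ and $\infty$, hence bounded.
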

The proof of Theorem \ref{theoP} is quite complicated, and is thus moved to Appendix B.
It follows from Item 5 and Item 2 of Theorem \ref{theoP} that the matrix $P$ satisfies (\ref{eqtwocond}), and is thus a suitable control Lyapunov function, based on which a solution to Problem \ref{prob1} will be established in the next subsection. Notice that the two PLEs (\ref{ple0}) and (\ref{ple1}) are respectively equivalent to
\begin{align}
    \tilde{X}\left( A-\frac{\gamma}{2}I_{n} \right)^{\mathrm{T}} +\left( A-\frac{\gamma}{2}I_{n} \right)\tilde{X} =-BB^{\mathrm{T}}, \label{lyap0}\\
    \tilde{Y}\!\left(\! A\!\!+\!\!BB^{\mathrm{T}}X\!\!+\!\!\frac{\gamma}{2}I_{n}\!\right)^{\mathrm{T}}\!\!+\!\!\left( \! A\!\!+\!\!BB^{\mathrm{T}}X\!\!+\!\!\frac{\gamma}{2}I_{n}\!\right)\!\tilde{Y}\!\!=\!\!BB^{\mathrm{T}}\!,\label{lyap00}
    \end{align}
where $\tilde{X}=X^{-1}$ and $\tilde{Y}=Y^{-1}$.
{
The PLEs \eqref{ple0} and \eqref{ple1} have Riccati-type algebraic forms due to their quadratic matrix terms, but they are algebraic matrix equations parameterized by the scalar $\gamma$, rather than time-varying Riccati differential equations.
By introducing $\tilde{X}=X^{-1}$ and $\tilde{Y}=Y^{-1}$, they can be transformed into the linear matrix equations \eqref{lyap0} and \eqref{lyap00}, respectively.
In our implementation, these parameterized equations are handled analytically in the design stage.
Let $F_X(\gamma)=A-\frac{\gamma}{2}I_n$.
Vectorizing \eqref{lyap0} yields $(I_n\otimes F_X(\gamma)+F_X(\gamma)\otimes I_n) \operatorname{vec}(\tilde X)=-\operatorname{vec}(BB^{\mathrm T})$, from which $\tilde X(\gamma)$ and hence $X(\gamma)$ are obtained.
After defining $F_Y(\gamma)=A+BB^{\mathrm T}X(\gamma)+\frac{\gamma}{2}I_n$, vectorizing \eqref{lyap00} similarly gives $(I_n\otimes F_Y(\gamma)+F_Y(\gamma)\otimes I_n) \operatorname{vec}(\tilde Y)=\operatorname{vec}(BB^{\mathrm T})$, yielding $Y(\gamma)$.
Thus, \(P(\gamma)=Y(\gamma)-X(\gamma)\) is obtained analytically as an explicit rational matrix function of \(\gamma\).
As guaranteed by Theorem 1, \(X(\gamma)\),  \(Y(\gamma)\), and \(P(\gamma)\) are positive definite and nonsingular for every \(\gamma>0\) in exact arithmetic.
During closed-loop operation, only the prederived expression \(P(\gamma(t))\) needs to be evaluated.
To reduce finite-precision loss for small \(\gamma\), \(P(\gamma)\) can be simplified analytically and evaluated directly, rather than formed by subtracting separately evaluated \(X(\gamma)\) and \(Y(\gamma)\).
}

\subsection{Solution to Problem \ref{prob1}}

In this subsection, we will present a solution to Problem \ref{prob1} based on Theorem \ref{theoP}.
To give our main result, we also need some preliminaries.
A piecewise-continuous function $\lambda(t):[0,T)\rightarrow \mathbf{R}$ is said to be $T$-FTS if
$\lim_{t\uparrow T}\lambda(t)=-\infty.$
The following lemma for testing the exact  $T$-FTS is recalled.

\begin{lemma} \label{lemmaefts}
Let  $ T>0 $  be a given constant.
Suppose that there is a function  $ V(t,x):[0,T) \times\mathbf{R}^{n}
\rightarrow \mathbf{R} $  such that, for any  $ t \in [0,T) $  and  $ x \in \mathbf{R}^{n} $,
   \begin{gather}
       \kappa_{1}(t)v_{1}(\| x \|)\le V(t,x)\le \kappa_{2}(t)v_{2}(\| x \|), \label{eqzb25}\\
       \dot{V}(t,x)|_{\eqref{systemnon}}\ge  \nu(t)V(t,x), \label{eqzb26}\\
           V(t,x)\leq c \Rightarrow \dot{V}(t,x)|_{\eqref{systemnon}}\le \mu(t)V(t,x), \label{eqzb27}
   \end{gather}
where  $ v_{1},v_{2} \in \mathcal{K} $,  $ \kappa_{1},\kappa_{2}:[0,T)\rightarrow (0,\infty) $,
$ \nu:[0,T)\rightarrow \mathbf{R} $ and $ \mu:[0,T)\rightarrow (-\infty,0) $ are piecewise continuous functions.
If
\begin{equation*}
    \rho(t)=\int_{0}^{t}\mu(s)\mathrm{d}s -\ln(\kappa_{1}(t)),
\end{equation*}
is a  $T$-FTS function, then system \eqref{systemnon} is exact $ T $-FTS with $\{x\; |\; V(0,x)\leq c \}$ being contained in the domain of attraction, and the state  $ x $  satisfies, for any  $ t \in [0,T) $,
\begin{gather*}
    v_{1}(\| x(t) \|)\le \frac{\kappa_{2}(0)}{\kappa_{1}(t)}\exp\left( \int_{0}^{t} \mu(s)\mathrm{d}s \right) v_{2}(\| x(0) \|),\\
    v_{2}(\| x(t) \|)\ge \frac{\kappa_{1}(0)}{\kappa_{2}(t)}\exp\left( \int_{0}^{t} \nu(s)\mathrm{d}s \right)v_{1}(\| x(0) \|).
\end{gather*}
\end{lemma}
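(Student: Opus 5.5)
The plan is a comparison-principle argument on the scalar function $V(t,x(t))$ along the closed-loop trajectories. The first step is to get the two-sided bound \eqref{eqzb26}, which holds globally. Differentiating gives a quantity bounded below by $\nu(t)V$. Set $w(t)=V(t,x(t))$. Then $w(t)\ge w(0)\exp(\int_0^t\nu(s)\mathrm{d}s)$ for every $t$ in the interval of existence. If $x(0)\neq 0$, the lower bound in \eqref{eqzb25} gives $w(0)\ge \kappa_1(0)v_1(\|x(0)\|)>0$, so $w(t)>0$. The upper bound $w(t)\le\kappa_2(t)v_2(\|x(t)\|)$ then yields the stated lower estimate on $v_2(\|x(t)\|)$, and hence $x(t)\neq 0$ for all $t<T$. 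This settles the exactness part.

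Next comes the invariance of the sublevel set, using \eqref{eqzb27}. Take $x(0)$ with $V(0,x(0))\le c$. I claim $w(t)\le c$ for all $t\in[0,T)$. The argument is the standard one by contradiction. If $w$ ever exceeded $c$, let $t_1$ be the last time with $w(t_1)\le c$ before the crossing. On a right neighbourhood where $w$ is near $c$, we have $\dot w\le\mu w<0$ (since $\mu<0$ and $w>0$), so $w$ cannot increase past $c$. Because $\mu$ is only piecewise continuous, I would phrase this with the upper Dini derivative, or by comparing to the solution of $\dot z=\mu z$ from $z(t_1)=w(t_1)$. Once $w(t)\le c$ holds throughout, \eqref{eqzb27} holds along the whole trajectory. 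Gronwall then gives $w(t)\le w(0)\exp(\int_0^t\mu)$. Combining this with \eqref{eqzb25} gives
\begin{equation*}
v_1(\|x(t)\|)\le \frac{\kappa_2(0)}{\kappa_1(t)}\exp\Big(\int_0^t\mu(s)\mathrm{d}s\Big)v_2(\|x(0)\|)=\kappa_2(0)v_2(\|x(0)\|)\,e^{\rho(t)},
\end{equation*}
which is the first estimate.

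The remaining step is convergence and Lyapunov stability. Since $\rho$ is $T$-FTS, $e^{\rho(t)}\to0$ as $t\uparrow T$, so $v_1(\|x(t)\|)\to0$, and $v_1\in\mathcal K$ gives $\|x(t)\|\to0$. The same bound also rules out finite escape before $T$, which justifies the existence of the solution on $[0,T)$.

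For stability in the sense of Lyapunov, I would use that $\rho$ is piecewise continuous and tends to $-\infty$, so it is bounded above on $[0,T)$ by some $\bar\rho$. Then $v_1(\|x(t)\|)\le \kappa_2(0)e^{\bar\rho}v_2(\|x(0)\|)$. Choosing $\|x(0)\|$ small enough makes $\|x(t)\|<\varepsilon$, and small $\|x(0)\|$ also keeps $V(0,x(0))\le c$. Stability at initial times $t_0>0$ follows the same way, restarting the estimates at $t_0$.

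I expect the main obstacle to be the invariance argument. It has to be handled carefully because $\mu$, $\kappa_i$ and hence $V$ may only be piecewise continuous in $t$, and \eqref{eqzb27} is only conditional. The first thing I would try is the comparison lemma applied to the upper Dini derivative.
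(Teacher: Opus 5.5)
Your proposal is correct and follows the argument the paper intends: the paper omits the proof, deferring to the comparison/Gronwall proof of Lemma 1 in \cite{zhou25auto}, with forward invariance of $\{V\le c\}$ added for this local version. One fix is needed in the invariance step: \eqref{eqzb27} gives no information on the right neighbourhood of $t_1$, where $w>c$, so arguing there (or comparing with $\dot z=\mu z$ from $t_1$) is circular; instead, take the contradiction at the exit time itself, where $w(t_1)=c$ gives $\dot w(t_1)\le\mu(t_1)c<0$ and hence $w<c$ just after $t_1$.
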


The above lemma is a local version of Lemma 1 in \cite{zhou25auto}, and can be proven in a similar way. The proof is thus omitted to save space.

Let $P$ be determined in Theorem \ref{theoP} and define the ellipsoid
\begin{equation}
    \mathscr{E}(\gamma) = \{ x \in \mathbf{R}^{n} \ | \ 2n\gamma x^{\mathrm{T}} P x \le 4\}. \label{defEs}
\end{equation}
\begin{theorem}
Assume that system \eqref{system} satisfies Assumption \ref{assump1}. Let $\gamma_0 >0$ be such that
$\mathit{\Omega}\subseteq \mathscr{E}(\gamma_{0}) $ where $P=P(\gamma)$ is determined in Theorem \ref{theoP}.
Then, for any prescribed time $T$ satisfying
\begin{equation}
    T > \frac{2n+\delta_{\mathrm{c}}}{2n\gamma_{0}}, \label{defT}
\end{equation}
the following linear time-varying state feedback defined on  $ t \in [0,T) $
{
\begin{equation}
    U(t,x)=-\left( \frac{1}{2}+\eta \right)  B^{\mathrm{T}}P(\gamma(t))x,\  \gamma(t)=\frac{T}{T-t}\gamma_{0},
\label{defgamat}
\end{equation}
}
is a solution to Problem \ref{prob1}, where $ \eta \geq 0 $ is any constant.
Moreover, for any  $ x(0) \in \mathscr{E}(\gamma_{0}) $, the state and control for the closed-loop system satisfy
\begin{gather}
   \alpha_{1} (T-t)^{\alpha_{3}} \| x(0) \| \le \|x(t)\| \le  \alpha_{2} (T-t)^{\alpha_{4}} \| x(0) \| ,\label{xle}\\
    \| u(t) \| \le  \alpha_{5} (T-t)^{\alpha_{4}}\| x(0) \|,\label{ule}
\end{gather}
where  $ \alpha_{i}>0,\ i=1,2,\ldots,5 $  are constants.
\label{theostate}
\end{theorem}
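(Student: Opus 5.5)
The plan is to apply Lemma~\ref{lemmaefts} with the time-varying Lyapunov function
\[
V(t,x)=2n\gamma(t)\,x^{\mathrm{T}}P(\gamma(t))x,
\]
where $\gamma(t)=T\gamma_0/(T-t)$. This gives $\dot\gamma=\gamma^2/(T\gamma_0)$. The level set $V\le 4$ at $t=0$ is exactly $\mathscr{E}(\gamma_0)\supseteq\mathit{\Omega}$, so the constant in (\ref{eqzb27}) will be $c=4$.

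\textbf{Step 1: the saturation is inactive on the level set.} By Item~3 of Theorem~\ref{theoP}, for each input channel $i$,
\[
|B_i^{\mathrm{T}}Px|^2\le (B_i^{\mathrm{T}}PB_i)(x^{\mathrm{T}}Px)\le \mathrm{tr}(B^{\mathrm{T}}PB)\,x^{\mathrm{T}}Px=2n\gamma\, x^{\mathrm{T}}Px=V.
\]
Hence $V\le4$ gives $|B_i^{\mathrm{T}}Px|\le 2$, and so $\tfrac12|B_i^{\mathrm{T}}Px|\le1$. With $\eta\ge0$ I intend to use the standard componentwise saturation inequality
\[
-x^{\mathrm{T}}PB\,\sigma\big((\tfrac12+\eta)B^{\mathrm{T}}Px\big)\le -\tfrac12 x^{\mathrm{T}}PBB^{\mathrm{T}}Px .
\]
This holds because $\sigma$ is monotone and $|\tfrac12 v_i|\le1$ implies $v_i\sigma((\tfrac12+\eta)v_i)\ge \tfrac12 v_i^2$.

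\textbf{Step 2: upper bound on $\dot V$.} Differentiating gives
\[
\dot V=\dot\gamma\,x^{\mathrm{T}}\Big(2nP+2n\gamma\frac{\mathrm{d}P}{\mathrm{d}\gamma}\Big)x+2n\gamma\big(x^{\mathrm{T}}(A^{\mathrm{T}}P+PA)x-2x^{\mathrm{T}}PB\sigma(\cdot)\big).
\]
Step 1 together with the right inequality of Item~4 bounds the bracket by $-\gamma x^{\mathrm{T}}Px$. Item~6 gives $2n\gamma\,\mathrm{d}P/\mathrm{d}\gamma\le\delta_{\mathrm c}P$. Therefore
\[
\dot V\le\Big(\frac{(2n+\delta_{\mathrm c})\dot\gamma}{2n\gamma}-\gamma\Big)V=\Big(\frac{2n+\delta_{\mathrm c}}{2n}\cdot\frac{\gamma}{T\gamma_0}-\gamma\Big)V=:\mu(t)V.
\]
Condition (\ref{defT}) is exactly what makes $\mu(t)=-\gamma(t)\big(1-\tfrac{2n+\delta_{\mathrm c}}{2nT\gamma_0}\big)<0$. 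It also follows that $\int_0^t\mu$ equals $-\beta\ln\frac{T}{T-t}$ with $\beta=T\gamma_0-\frac{2n+\delta_{\mathrm c}}{2n}>0$. Since $\mu<0$, $V$ is nonincreasing, so the set $\{V\le4\}$ is forward invariant and Step 1 remains valid along the trajectory.

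\textbf{Step 3: lower bound on $\dot V$ and the bounds in Lemma~\ref{lemmaefts}.} Saturation only reduces the magnitude of the control. Using $-x^{\mathrm{T}}PB\sigma(\cdot)\ge-(\tfrac12+\eta)x^{\mathrm{T}}PBB^{\mathrm{T}}Px$, the left inequality of Item~4, the left inequality of Item~6 ($\mathrm{d}P/\mathrm{d}\gamma>0$), and the bound $x^{\mathrm{T}}PBB^{\mathrm{T}}Px\le \mathrm{tr}(B^{\mathrm{T}}PB)\,x^{\mathrm{T}}Px=2n\gamma\,x^{\mathrm{T}}Px$, I expect to obtain $\dot V\ge\nu(t)V$ with $\nu(t)=-c'\gamma(t)+\dot\gamma/\gamma$ for some constant $c'>0$. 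The resulting lower bound $\int_0^t\nu$ is of logarithmic form, which yields the power-law lower bound in (\ref{xle}).

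For $\kappa_1,\kappa_2$ I use the bound $2n\gamma\lambda_{\min}(P)\le V/\|x\|^2\le 2n\gamma\lambda_{\max}(P)$. Item~5 controls $\lambda_{\max}(P)$. For $\lambda_{\min}(P)$, since $\gamma$ is increasing and $\mathrm{d}P/\mathrm{d}\gamma>0$, we get $P(\gamma(t))\ge P(\gamma_0)$, so $\kappa_1(t)\ge 2n\gamma(t)\lambda_{\min}(P(\gamma_0))$, which grows like $(T-t)^{-1}$. Then
\[
\rho(t)=\int_0^t\mu-\ln\kappa_1\to-\infty,
\]
so $\rho$ is a $T$-FTS function. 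Lemma~\ref{lemmaefts} then gives exact $T$-FTS with $\mathscr{E}(\gamma_0)$ inside the domain of attraction. Its two estimates, combined with Item~5 (which gives $\lambda_{\max}(P)\sim\gamma^{2\mu_{\mathrm c}-1}$ for large $\gamma$), translate into the power bounds (\ref{xle}) in $(T-t)$.

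For (\ref{ule}), I bound
\[
\|u\|\le(\tfrac12+\eta)\|B^{\mathrm{T}}P^{1/2}\|\,\|P^{1/2}x\|\le(\tfrac12+\eta)\sqrt{2n\gamma}\,\sqrt{V/(2n\gamma)}=(\tfrac12+\eta)\sqrt{V},
\]
using $\|B^{\mathrm{T}}P^{1/2}\|^2\le\mathrm{tr}(B^{\mathrm{T}}PB)=2n\gamma$. Since $\sqrt V$ decays like $(T-t)^{\beta/2}$ times $\|x(0)\|$ up to constants, this gives (\ref{ule}).

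The main obstacle I expect is the bookkeeping in matching the exponents $\alpha_3$ and $\alpha_4$. In particular, I have to make sure the $\|x\|$ upper bound still decays, i.e., that the positive exponent from $\int\mu$ beats the growth of $\kappa_2(0)/\kappa_1(t)$. The $\kappa_1$ bound I will obtain via the monotonicity $P(\gamma)\ge P(\gamma_0)$ rather than through Item~5.
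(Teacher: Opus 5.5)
Your proposal is correct and follows the paper's proof essentially step by step. Both use the same $V=2n\gamma x^{\mathrm{T}}Px$, and the same Item~3 estimate giving $x^{\mathrm{T}}PB\sigma(\cdot)\ge\frac12 x^{\mathrm{T}}PBB^{\mathrm{T}}Px$ on $\{V\le 4\}$. Items~4 and~6 are used in the same way to get $\dot V\le -\varepsilon_2\gamma V$ on that set and $\dot V\ge -c'\gamma V$ globally, and the argument closes with Lemma~\ref{lemmaefts} and $\|u\|\le(\frac12+\eta)\sqrt{V}$.

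Your only deviations are harmless and slightly sharpen the exponents: you take $\kappa_1(t)=2n\gamma(t)\lambda_{\min}(P(\gamma_0))$ instead of the paper's constant $2n\gamma_0\lambda_{\min}(P(\gamma_0))$, and you keep the nonnegative $\dot\gamma/\gamma$ term in $\nu$. The obstacle you anticipate does not arise, because $\kappa_2(0)/\kappa_1(t)$ is nonincreasing. You only need to weaken the state bound to the common exponent $\alpha_4=\beta/2$ dictated by the control bound.
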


\begin{proof}
First, we notice that, as $(A,B)$ satisfies Assumption \ref{ass1}, it follows from Item 5 of Theorem \ref{theoP} that $\lim_{\gamma\downarrow 0} P(\gamma)=0$, which, in view of the fact that $\mathit{\Omega}$ is bounded, implies that there always exists a $\gamma_{0}>0$ such that $\mathit{\Omega}\subseteq \mathscr{E}(\gamma_{0}) $ is satisfied.

For simplicity, we omit the dependence of variables on $t$ in the following.
The closed-loop system consisting of \eqref{system} and \eqref{defgamat} is
\begin{equation}
    \dot{x}=Ax-B\sigma\left( \left( \frac{1}{2}+\eta \right)B^{\mathrm{T}}Px  \right)  .    \label{systemclosestate}
\end{equation}
Consider the Lyapunov-like function as $V=V(t,x(t))=2n\gamma x^{\mathrm{T}}(t)P(\gamma(t))x(t),t\in [0,T).$

According to the definition of  $ V $, for any  $ t \in [0,T) $, we can obtain from (\ref{defdc}) that $V \ge 2n\lambda_{\min}(P(\gamma_{0})) \gamma_{0}\| x \|^{2}.$
According to Item 5 of Theorem \ref{theoP}, there exists a constant $ \varphi(\gamma_{0}) $ depending on $ \gamma_{0} $ such that $\| P (\gamma) \|\le \varphi(\gamma_{0})\gamma^{2\mu_{\mathrm{c}}-1},\forall \gamma\ge \gamma_{0} $, from which it follows that $ V\leq 2n\varphi(\gamma_{0}) \gamma^{2\mu_{\mathrm{c}}}  \| x \|^{2} $.
Therefore, $ V(t,x) $ satisfies condition (\ref{eqzb25}) in Lemma \ref{lemmaefts} with $ v_{1}(s)=v_{2}(s)=s^{2} $, $ \kappa_{1}=2n\lambda_{\min}(P(\gamma_{0})) \gamma_{0} $ and $ \kappa_{2}(t)=2n\varphi(\gamma_{0}) \gamma^{2\mu_{\mathrm{c}}}(t)$.

By Item 3 of Theorem \ref{theoP} we have
\begin{equation}
PBB^{\mathrm{T}}P\leq P^{\frac{1}{2}}\mathrm{tr}\left(P^{\frac{1}{2}}BB^{\mathrm{T}}P^{\frac{1}{2}}\right)P^{\frac{1}{2}}=2n\gamma P.
\label{eqdVleV0}
\end{equation}
Denote $ B=[b_{1}\ b_{ 2}\ \cdots \ b_{m}]$. Based on Items 4 and 6 of Theorem \ref{theoP} and (\ref{eqdVleV0}), the time derivative of $V$ along the trajectories of system \eqref{systemclosestate} can be evaluated as
\begin{align*}
   \dot{V}=&
    2n\dot{\gamma}x^{\mathrm{T}}Px+ 2n\gamma  x^{\mathrm{T}}\dot{P} x + 2n\gamma  x^{\mathrm{T}}(A^{\mathrm{T}}P+PA)x  \\
    & -4n \gamma  \sum_{i=1}^{m}x^{\mathrm{T}}Pb_{i}\sigma\left( \left( \frac{1}{2}+\eta \right)b_{i}^{\mathrm{T}}Px  \right)\\
    \ge  & 2n\dot{\gamma}x^{\mathrm{T}}Px+ 2n\gamma  x^{\mathrm{T}}\dot{P} x + 2n\gamma  x^{\mathrm{T}}PBB^{\mathrm{T}}Px  \\
    & -2nk\gamma(\gamma-\phi(A)) x^{\mathrm{T}}Px\\
    & -4n \gamma \left( \frac{1}{2}+\eta \right)  \sum_{i=1}^{m}x^{\mathrm{T}}Pb_{i} b_{i}^{\mathrm{T}}Px \\
    \ge & -2nk\gamma ( \gamma-\phi(A) )   x^{\mathrm{T}}Px- 4\eta n\gamma  x^{\mathrm{T}}PBB^{\mathrm{T}}Px\\
    \ge & -2nk\gamma ( \gamma-\phi(A) )   x^{\mathrm{T}}Px- 8\eta n^{2}\gamma^{2}  x^{\mathrm{T}}Px\\
    = & -\left( k\frac{ \gamma-\phi(A) }{\gamma} +4\eta n   \right) \gamma V\\
    \ge& -\left( 4\eta n+k-\frac{k\phi(A)}{\gamma_{0}} \right) \gamma V\\
    = & -\varepsilon_{1} \gamma V, \quad \forall t \in [0,T),
\end{align*}
where $\varepsilon_{1}=4 \eta n+k-k\phi(A)/\gamma_{0}{>0}$ is a  constant.
Therefore, condition (\ref{eqzb26}) in Lemma \ref{lemmaefts} is satisfied with $ \nu(t)=-\varepsilon_{1}\gamma(t) $.

{
To verify condition (\ref{eqzb27}) in Lemma \ref{lemmaefts} with $c=4$, consider any $(t,x)$ satisfying $ V(t,x)=2n\gamma x^{\mathrm{T}}Px \le 4 $.
With the help of Item 3 of Theorem \ref{theoP}, we have
\begin{align*}
    \left| \frac{1}{2} b_{i}^{\mathrm{T}}Px\right|^{2}
    &=\frac{1}{4}x^{\mathrm{T}}Pb_{i}b_{i}^{\mathrm{T}}Px\\
    &\leq
    \frac{1}{4}\mathrm{tr}(B^{\mathrm{T}}PB)x^{\mathrm{T}}Px\\
    &=\frac{n}{2}\gamma x^{\mathrm{T}}Px
    =\frac{1}{4}V(t,x)
    \leq 1 .
\end{align*}
Hence, $\left|\frac{1}{2}b_{i}^{\mathrm{T}}Px\right|\le1$.
By the definition of the standard saturation function, $ \sigma(s)=s $ for $ |s| \le 1 $, and therefore $ \sigma\left(\frac{1}{2}b_{i}^{\mathrm{T}}Px\right) = \frac{1}{2}b_{i}^{\mathrm{T}}Px$.
More generally, any $0<c\leq4$ can be used in place of $4$, with $c<4$ only yielding a smaller certified ellipsoidal domain, while $ c>4 $ is not guaranteed by the present saturation estimate; hence, $4$ is the largest level certified by the analysis rather than a controller tuning parameter.
}
Then, by using Items 4 and 6 of Theorem \ref{theoP}, the time derivative of $ V $ along the trajectories of system \eqref{systemclosestate} can be evaluated as follows
\begin{align*}
    \dot{V}=&
    2n\dot{\gamma}x^{\mathrm{T}}Px+2n\gamma x^{\mathrm{T}}\dot{P} x +2n\gamma x^{\mathrm{T}}(A^{\mathrm{T}}P+PA)x  \\
    & -4n\gamma \sum_{i=1}^{m}x^{\mathrm{T}}Pb_{i}\sigma\left(\left( \frac{1}{2}+\eta \right)  b_{i}^{\mathrm{T}}Px \right)   \\
    \le & 2n\dot{\gamma}x^{\mathrm{T}}Px +\dot{\gamma}\delta_{\mathrm{c}}  x^{\mathrm{T}}Px +2n\gamma x^{\mathrm{T}}(A^{\mathrm{T}}P+PA)x \\
    &- 4n \gamma  \sum_{i=1}^{m}
    x^{\mathrm{T}}Pb_{i}\sigma\left( \frac{1}{2} b_{i}^{\mathrm{T}}Px\right) \\
    =& 2n\dot{\gamma}x^{\mathrm{T}}Px +\dot{\gamma}\delta_{\mathrm{c}}  x^{\mathrm{T}}Px +2n\gamma x^{\mathrm{T}}(A^{\mathrm{T}}P+PA)x\\
    &- 2n \gamma  \sum_{i=1}^{m}
    x^{\mathrm{T}}Pb_{i}  b_{i}^{\mathrm{T}}Px\\
    = & 2n\dot{\gamma}x^{\mathrm{T}}Px+ \dot{\gamma}\delta_{\mathrm{c}} x^{\mathrm{T}}Px + 2n \gamma  x^{\mathrm{T}}(A^{\mathrm{T}}P+PA\\
    & -PBB^{\mathrm{T}}P)x\\
    \le & 2n\dot{\gamma}x^{\mathrm{T}}Px +\dot{\gamma}\delta_{\mathrm{c}} x^{\mathrm{T}}Px-  2n\gamma^{2} x^{\mathrm{T}}Px\\
    = & \left( \frac{\dot{\gamma}}{ \gamma }   +\frac{\delta_{\mathrm{c}}}{2n\gamma }\dot{\gamma} - \gamma \right) 2n\gamma  x^{\mathrm{T}}Px\\
    = & \left( \frac{2n+\delta_{\mathrm{c}}}{2n\gamma }\dot{\gamma} - \gamma\right) V.
\end{align*}
Based on \eqref{defgamat}, we have $\dot{\gamma}=\gamma^{2}/(T\gamma_{0}).$
Then there holds
\begin{equation*}
    \dot{V} \le \left( \frac{2n+\delta_{\mathrm{c}}}{2nT\gamma_{0}} -1  \right)   \gamma V \triangleq  -\varepsilon_{2} \gamma V,% \label{eqdVleV}
\end{equation*}
where $\varepsilon_{2} >0$ in view of (\ref{defT}).
Therefore, condition (\ref{eqzb27}) in Lemma \ref{lemmaefts} is satisfied with $c=4$ and $ \mu(t)=-\varepsilon_{2}\gamma(t) $.

It can be verified that
\begin{equation*}
    \rho(t)=\int_{0}^{t}\mu(s)\mathrm{d}s -\ln(\kappa_{1}(t)),
\end{equation*}
is a  $ T $-FTS function.
Therefore, the function  $ V(t,x) $  satisfies all the conditions in Lemma \ref{lemmaefts}.
Thus the closed-loop system (\ref{systemclosestate}) is exact $T$-FTS with $\mathscr{E}(\gamma_{0})$ being contained in the domain of attraction.
Moreover, it follows from Lemma \ref{lemmaefts} that the norm of the state  $ x(t) $  of the closed-loop system \eqref{systemclosestate} satisfies
{
\begin{align*}
    \| x(t) \|^{2} \le & \frac{\kappa_{2}(0)}{\kappa_{1}}\exp\left( \int_{0}^{t}\mu(s)\mathrm{d}s  \right) \| x(0) \|^{2} \\
    = & \frac{\kappa_{2}(0)(T-t)^{\varepsilon_{2}\gamma_{0}T}}{\kappa_{1}T^{\varepsilon_{2}\gamma_{0}T}} \| x(0) \|^{2},\ \forall t \in [0,T).
\end{align*}}
In addition, we have
{
\begin{align*}
    \| x(t) \|^{2}\!\ge &  \frac{\kappa_{1}}{\kappa_{2}(t)}\exp\left( \int_{0}^{t}\nu(s)\mathrm{d}s  \right)\| x(0) \|^{2}  \\
    = & \frac{\kappa_{1}(T-t)^{\varepsilon_{1}\gamma_{0}T}}{\kappa_{2}(t)T^{\varepsilon_{1}\gamma_{0}T}}  \|x(0)\|^{2}\\
    =& \frac{\kappa_{1}(T\!\!-\!t)^{2\mu_{\mathrm{c}}+\varepsilon_{1}\gamma_{0}T}}{2n\varphi(\gamma_{0})\gamma_{0}^{2\mu_{\mathrm{c}}}T^{2\mu_{\mathrm{c}}+\varepsilon_{1}\gamma_{0}T}} \|x(0)\|^{2}
    ,\ \forall t \in [0,T).
\end{align*}}
Thus, \eqref{xle} is proven.

Finally, it follows from (\ref{eqdVleV0}) that, for all $t \in [0,T)$,
{
\begin{align*}
    \| u(t) \|^{2}\le & \| U(t) \|^{2}=\left( \frac{1}{2}+\eta \right)^{2} x^{\mathrm{T}}PBB^{\mathrm{T}}Px \\
    \le& \left( \frac{1}{2}+\eta \right)^{2} 2n  \gamma  x^{\mathrm{T}}Px = \left( \frac{1}{2}+\eta \right)^{2} V(t,x(t))\\
    \le & \left( \frac{1}{2}+\eta \right)^{2} \exp\left( \int_{0}^{t}\mu(s)\mathrm{d}s  \right) V(0,x(0))\\
    \le & \frac{\left( \frac{1}{2}+\eta \right)^{2}2n\gamma_{0}\left\| P(\gamma_{0}) \right\| (T-t)^{\varepsilon_{2}\gamma_{0}T} }{T^{\varepsilon_{2}\gamma_{0}T}} \|x(0)\|^{2}.
\end{align*}}
Equation \eqref{ule} is now verified and the proof is finished.
\end{proof}

{
\begin{remark}
The bound in \eqref{defT} is a simple closed-form sufficient condition associated with the reciprocal schedule in \eqref{defgamat}.
Its conservatism can be reduced by exploiting the pointwise derivative bound in Item 6 of
Theorem~\ref{theoP}.
Define
\[
\psi_f(\gamma)=1+\frac{f(\gamma)}{2n},\quad
T_f(\gamma_0)=
\int_{\gamma_0}^{\infty}
\frac{\psi_f(s)}{s^2}\mathrm{d}s.
\]
For any prescribed time $ T>T_f(\gamma_0) $, choose $\gamma(t)$ according to
\begin{equation}
    \dot{\gamma} =
    \frac{T_f(\gamma_0)}{T}
    \frac{\gamma^2}{\psi_f(\gamma)},
    \quad
    \gamma(0)=\gamma_0.
    \label{refinedgamma}
\end{equation}
Then $ t= T/T_f(\gamma_0) \int_{\gamma_0}^{\gamma(t)} \psi_f(s)/s^2\mathrm{d}s $.
Since $\psi_f(s)>0$ and $ T_f(\gamma_0) = \int_{\gamma_0}^{\infty}\psi_f(s)/s^2 \mathrm{d}s $, the integral on the right-hand side is strictly smaller than $ T_f(\gamma_0) $ for every finite $ \gamma(t) $ and approaches $ T_f(\gamma_0) $ only as $ \gamma(t)\rightarrow\infty $.
Hence, $\gamma(t)\rightarrow\infty$ as $t\uparrow T$.
Moreover, the proof of Theorem~\ref{theostate} gives $ \dot V \leq -( 1- T_f(\gamma_0)/T )\gamma V $.
Since $\psi_f(\gamma)\geq1$, it follows from \eqref{refinedgamma} that $ \int_0^T\gamma(t)\mathrm{d}t=\infty $.
Hence, the exact prescribed-time convergence property is retained.
Furthermore, since $ f(\gamma)\leq\delta_{\mathrm c}$, $ T_f(\gamma_0) \leq (2n+\delta_{\mathrm c})/ 2n\gamma_0$.
Thus, the pointwise bound $f(\gamma)$ provides a less conservative prescribed-time design than the uniform estimate based on $\delta_{\mathrm c}$.
\label{remarkgama}
\end{remark}
}

{
\begin{remark}
Although $\gamma(t)\rightarrow\infty$ as $t\uparrow T$, the singularity at $t=T$ can be avoided in practical implementation by truncating the time-varying gain, similarly to \cite{zhou20auto1}.
Specifically, for any sufficiently small $\varepsilon>0$, Theorem~\ref{theostate} guarantees that there exists a $T_{\mathrm{c}}<T$ such that $\|x(t)\|\leq\varepsilon$ for $t\in[T_{\mathrm{c}},T)$.
Define
\begin{equation*}
    \bar{\gamma}(t)=
    \begin{cases}
        \gamma(t), & 0\leq t<T_{\mathrm{c}},\\
        \gamma(T_{\mathrm{c}}) \triangleq \gamma_{\mathrm{c}}, & t\geq T_{\mathrm{c}}.
    \end{cases}
\end{equation*}
Then the implementable feedback is $ U(t,x)=-\left(1/2+\eta\right) B^{\mathrm T} P(\bar{\gamma}(t))x,\ t\geq0 $.
For $t\geq T_{\mathrm{c}}$, $\bar{\gamma}(t)=\gamma_{\mathrm{c}}$ and $\dot{\bar{\gamma}}(t)=0$.
With $V=2n\bar{\gamma}x^{\mathrm T}P(\bar{\gamma})x$, it follows from the estimate in the proof of Theorem~\ref{theostate} that $\dot V\leq-\gamma_{\mathrm{c}}V,\ t\geq T_{\mathrm{c}}$.
Therefore, after the gain truncation, the state continues to converge exponentially to zero. In particular, the control input is not removed at $t=T$, so residual states remain stabilized after the prescribed time. In practice, $T_{\mathrm{c}}$ can simply be chosen sufficiently close to $T$.
\end{remark}
}

\section{\label{sec4}A Numerical Example}

Consider a linear system in the form of (\ref{system}) with \cite{sirisena1968optimal}
{
\begin{equation}
    \dot{x}(t)=\left[\begin{array}{cccc}
     0& 1&0 &0 \\
     0& 0& 1&0 \\
     0& 0&0 &1 \\
     0&0 &-2 &-2
    \end{array}\right]x(t)+
    \left[\begin{array}{c}
    0 \\
    0 \\
    0 \\
    1
    \end{array}\right]u.
    \label{syssi}
\end{equation}
It can be verified that $(A,B)$ is controllable and $ \lambda(A)=\{0,0,-1+\mathrm{i},-1-\mathrm{i}\}$.
Therefore, Assumption~\ref{assump1} is satisfied for this system.

The feedback command used in the simulations is designed as $U(t,x)=-(1/2+\eta)B^{\mathrm T}P(\bar{\gamma}(t))x$, where $\eta\geq0$ is a free feedback-gain parameter and $\bar{\gamma}(t)$ denotes the practically implemented gain schedule.
For the saturated control case, the actual input applied to the system is given by $u(t)=\sigma(U(t,x))$.
The ideal gain schedule $\gamma(t)$ is obtained from \eqref{refinedgamma}.
For practical implementation, the gain schedule is truncated at $T_{\mathrm c}=T-0.2$~s; specifically, $\bar{\gamma}(t)=\gamma(t)$ for $0\leq t<T_{\mathrm c}$ and $\bar{\gamma}(t)=\gamma_{\mathrm c}$ for $t\geq T_{\mathrm c}$, where $\gamma_{\mathrm c}=\gamma(T_{\mathrm c})$.
Thus, $\gamma_{\mathrm c}$ is determined by the gain schedule rather than independently selected.

The parameter $\eta$ does not affect the validity of Theorem~\ref{theostate}.
Increasing $\eta$ increases the magnitude of the unsaturated control command and may affect the transient behavior, while it may also increase the actuator demand and the possibility of actuator saturation.
In the simulations, $\eta=5$ is selected as an illustrative value rather than an optimized one.

The matrix $P(\gamma)$ in the above controller is evaluated using its explicit polynomial representation.
For this example, the parameterized linear matrix equations \eqref{lyap0} and \eqref{lyap00} are vectorized by means of Kronecker products and solved analytically as functions of $\gamma$ in the design stage.
After algebraic simplification, the resulting positive definite matrix $P(\gamma)=Y(\gamma)-X(\gamma)$ admits the following explicit polynomial representation:
% The feedback command is designed as $U(t,x)=-(1/2+\eta)B^{\mathrm T}P(\bar{\gamma}(t))x$, where $\bar{\gamma}(t)$ is obtained by truncating the refined schedule \eqref{refinedgamma} at a sufficiently large finite value $\gamma_{\mathrm{c}}$ as described in the above remark.
% For the saturated control case, the actual input applied to the system is given by $u(t)=\sigma(U(t,x))$.

% The parameter $\eta\ge0$ in the feedback law is a free feedback-gain parameter and does not affect the validity of Theorem~\ref{theostate}.
% Increasing $\eta$ increases the magnitude of the unsaturated control command and may affect the transient behavior, but it also tends to increase the demand on the actuator and the possibility of actuator saturation.
% In the simulations, $\eta=5$ is selected as an illustrative value rather than an optimized one.

% In the numerical implementation, the explicit polynomial expression of $P(\gamma)$ given above is evaluated directly at the prescribed value of $\gamma$.
% Hence, the parameterized matrix equations are not numerically resolved for each $\gamma$, and $P(\gamma)$ is not formed by subtracting separately computed matrices $X(\gamma)$ and $Y(\gamma)$.
% For this example, the parameterized linear matrix equations \eqref{lyap0} and \eqref{lyap00} are vectorized by means of Kronecker products and solved analytically with respect to $\gamma$ in the design stage.
% After algebraic simplification, the resulting positive definite matrix $P(\gamma)=Y(\gamma)-X(\gamma)$ admits the following explicit polynomial representation:
}
\begin{equation*}
    P=\left[\begin{array}{cccc}
    2\gamma^{3}p_{11} & 4\gamma^{2}p_{12} & 2\gamma^{2}p_{13} & 8\gamma^{2}%
    p_{14}\\
    4\gamma^{2}p_{12} & 4\gamma p_{22} & 4\gamma p_{23} & 8\gamma p_{24}\\
    2\gamma^{2}p_{13} & 4\gamma p_{23} & 4\gamma p_{33}   & 12\gamma \left(  2\gamma+1\right)  \\
    8\gamma^{2}p_{14} & 8\gamma p_{24} & 12\gamma \left(  2\gamma+1\right)   &
    8\gamma
    \end{array}
    \right],
\end{equation*}
where $p_{ij}=p_{ij}(\gamma)$ are some polynomials defined by
\begin{align*}
p_{11}  & =127\gamma^{4}+252\gamma^{3}+248\gamma^{2}+120\gamma+28,\\
p_{12}  & =66\gamma^{4}+103\gamma^{3}+76\gamma^{2}+26\gamma+4,\\
p_{13}  & =57\gamma^{3}+74\gamma^{2}+46\gamma+8,\\
p_{14}  & =2\gamma^{2}+2\gamma+1,\\
p_{22}  & =119\gamma^{4}+164\gamma^{3}+102\gamma^{2}+32\gamma+4,\\
p_{23}  & =50\gamma^{3}+51\gamma^{2}+22\gamma+4,\\
p_{24}  & =4\gamma^{2}+3\gamma+1,\\
p_{33}  & =37\gamma^{2}+30\gamma+8.
\end{align*}
{
Thus, $P(\gamma)$ is a polynomial matrix and can be evaluated directly during the simulation without repeatedly solving the parameterized matrix equations.
For the refined gain design, define $\delta(\gamma)$ as the maximum eigenvalue of $2n\gamma(\mathrm{d}P/\mathrm{d}\gamma)P^{-1}$.
The functions $\delta(\gamma)$ and $f(\gamma)$, together with the uniform bound $\delta_{\mathrm c}$, are shown in Fig.~\ref{figdetac}.
The numerical evaluation of $\sup_{\gamma>0} \{\lambda_{\max}( 2n\gamma \frac{\mathrm{d}P}{\mathrm{d}\gamma}P^{-1} )  \} $ gives $\delta_{\mathrm c}=60.91$.
As guaranteed by Item~6 of Theorem~\ref{theoP}, $\delta(\gamma)\leq f(\gamma)\leq\delta_{\mathrm c}$.
}
% It follows that $P$ is a polynomial matrix of $\gamma$.

\begin{figure}[htbp]
\centering
\includegraphics[scale=1]{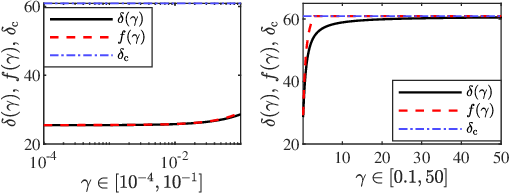}
\caption{The pointwise factors $\delta(\gamma)$ and $f(\gamma)$ and the uniform bound $\delta_{\mathrm c}$ for system \eqref{syssi}.}
\label{figdetac}
\vspace{-1em}
\end{figure}

{
Let the initial set be given by $\mathit{\Omega}(D)=\{x\in\mathbf{R}^{4}\mid\|x\|\leq D\}$, where $D>0$.
For each radius $D$, the initial condition is selected as $x(0)=\beta[1,-1,1,-1]^{\mathrm T}$.
Since $\|[1,-1,1,-1]^{\mathrm T}\|=2$, choosing $\beta=D/2$ places $x(0)$ on the boundary of $\mathit{\Omega}(D)$.
For each prescribed initial set $\mathit{\Omega}(D)$, the parameter $\gamma_0>0$ is selected from the admissible values satisfying $\mathit{\Omega}(D)\subseteq\mathscr{E}(\gamma_0)$.
According to \eqref{defEs}, this condition is equivalent to $2n\gamma_0\lambda_{\max}(P(\gamma_0))D^2\leq4$.
Among the admissible values, a relatively large $\gamma_0$ is preferred because $T_f(\gamma_0)$ in Remark~\ref{remarkgama} is strictly decreasing with respect to $\gamma_0$.
Therefore, a larger admissible $\gamma_0$ results in a smaller sufficient prescribed-time bound.
The prescribed time $T$ is then selected as the smallest integer strictly larger than $T_f(\gamma_0)$.
}
\begin{center}
\begin{table}[htbp]
    \centering
  \textbf{Table 1}: {Simulation settings for different values of \(D\)}\\
\begin{tabular}{c|c|c|c|c} \hline \hline
$D$&1&2&3&4\\\hline
$\beta$&0.5&1&1.5&2\\
$\gamma_{0}$&0.0856&0.0466&0.0320&0.0244\\
${T_{f}(\gamma_{0})}$&{63.74}&{107.38}&{149.97}&{191.85}\\
$T$&{64}&{108}&{150}&{192}\\
${T_{\mathrm c}}$&{63.8}&{107.8}&{149.8}&{191.8}\\
${\gamma_{\mathrm c}}$&{43.2475}&{43.3179}&{43.0777}&{43.1034}\\
\hline \hline
\end{tabular}
\vspace{-2em}
\end{table}
\end{center}
{
The corresponding state and control responses for different prescribed initial sets $\mathit{\Omega}(D)$ are shown in Fig.~\ref{figx}.
For all considered values of $D$, the state trajectories converge to a neighborhood of the origin within the prescribed time $T$, and remain bounded thereafter under the frozen gain, while the actual control input satisfies the prescribed saturation constraint.
Moreover, as $D$ increases, the prescribed initial set becomes larger, while the corresponding sufficient time bound $T_f(\gamma_0)$, and hence the selected prescribed time $T$, increase.
% Once the gain reaches $\gamma_{\mathrm{c}}$, it is kept constant thereafter, so that the controller remains well defined for $t\geq T$.
% The corresponding state and control responses for different prescribed initial sets $\mathit{\Omega}(D)$ are shown in Fig.~\ref{figx}.
% The numerical results show that, for all considered values of \(D\), the state trajectories exhibit rapid convergence to the origin near the prescribed time \(T\), highlighting the prescribed-time convergence behavior predicted by the theoretical analysis.
% Moreover, as $ D $ increases, the prescribed initial set becomes larger, while the corresponding sufficient time bound $T_f(\gamma_0)$, and hence the selected prescribed time $ T $, increase.
}

\begin{figure}[h]
\begin{center}
   \includegraphics[scale=1]{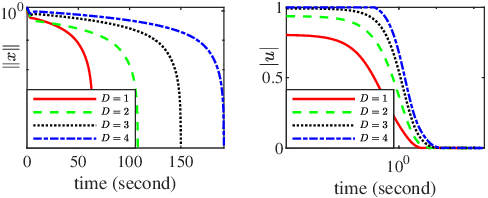}
\end{center}
\caption{State norm and control input magnitude under the proposed constrained controller for different values of \(D\).}
\label{figx}
\end{figure}

{
To further evaluate the performance of the proposed time-varying high-gain controller, a comparison with the prescribed-time control method in \cite{song2017time} is provided.
The method in \cite{song2017time} is a representative time-varying high-gain feedback approach, where the prescribed-time convergence is achieved by introducing a time-varying scaling function.
For completeness and reproducibility, the compared controller in \cite{song2017time} is implemented according to its original design procedure, and the detailed controller parameters are provided in the Appendix~C.
For a task-level comparison, the method in \cite{song2017time} is implemented according to its original unconstrained formulation, using the same initial conditions and the same prescribed terminal times \(T\) as those listed in Table 1.
The purpose of this comparison is to examine whether the original controller designs can accomplish the same prescribed-time regulation task without violating the prescribed actuator-amplitude constraint.
We do not add an external saturation map to the controller in \cite{song2017time}, because doing so would alter its closed-loop dynamics and its original prescribed-time convergence guarantee would no longer automatically apply.
Accordingly, the comparison focuses on the actuator demand generated by the original unconstrained controller in \cite{song2017time} and the constraint-handling capability of the proposed controller.

To compare the constrained input of the proposed controller with the actuator demand of \cite{song2017time}, let \(U_2(t)\) denote the unconstrained command generated by \cite{song2017time}, while \(U(t,x)\) and \(u(t)=\sigma(U(t,x))\) denote respectively the command and actual input of the proposed controller.
Define \(u_{\max}=\max_{0<t<T}|u(t)|\), \(U_{2,\max}=\max_{0<t<T}|U_2(t)|\), \(E_u=\int_0^T u^2(t)\,dt\), and \(E_{U_2}=\int_0^T U_2^2(t)\,dt\).
These quantities are reported in Table 2, where \(U_{2,\max}>1\) indicates that the command required by \cite{song2017time} exceeds the prescribed actuator bound.
The corresponding responses of \cite{song2017time} are shown in Fig.~\ref{figsong}.
}

\begin{center}
\begin{table}[htbp]
    \centering
  \textbf{Table 2}: {Comparison between the actual saturated input \(u\) of the proposed controller and the unconstrained input demand \(U_2\) of \cite{song2017time}.}\\
    \begin{tabular}{c|c|c|c|c}
\hline\hline
$D$ & 1 & 2 & 3 & 4\\
\hline
{$u_{\max}$} & {0.8037 }& {0.9373 }& {0.9923 }& {1.0000}\\
{$U_{2,\max}$} & {0.4595 }& {0.8786} &{1.2958 }&{1.7120}\\
{$E_u$ }& {0.1723 }& {0.4785 }& {0.8121 }& {1.1468}\\
{$E_{U_{2}}$ }& {0.1437 }& {0.5452 }& {1.2048} & {2.1221}\\
\hline\hline
\end{tabular}
\end{table}
\end{center}
\vspace{-3em}
\begin{figure}[htbp]
\begin{center}
   \includegraphics[scale=1]{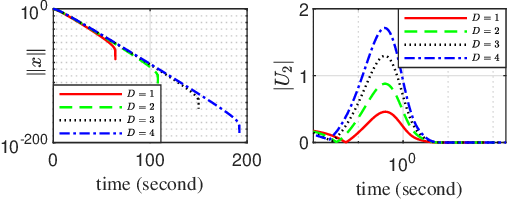}
\end{center}
\caption{State norm and unconstrained control-command magnitude \(|U_2|\) under the time-varying high-gain method in \cite{song2017time} for different values of \(D\).}
\label{figsong}
\end{figure}

{
Both methods are simulated using the same initial conditions and prescribed terminal times.
The common value of \(T\) defines an identical regulation task and is not used to claim that either method has a shorter actual settling time.
For \(D=1\), \(U_{2,\max}<u_{\max}\), whereas the two peak values are comparable for \(D=2\).
For \(D=3\) and \(D=4\), however, the peak values of the unconstrained command \(U_2\) required by \cite{song2017time} are \(1.2958\) and \(1.7120\), respectively.
These values exceed the prescribed actuator bound and show that the original controller in \cite{song2017time} cannot be directly implemented by the considered actuator without modifying its control law.
In contrast, the actual input \(u=\sigma(U)\) of the proposed controller remains within \([-1,1]\), while the state is driven to a neighborhood of the origin within the prescribed horizon.
Moreover, for these two cases, the unconstrained command energies \(E_{U_2}\) required by \cite{song2017time} are higher than the actual input energies \(E_u\) of the proposed controller.
These results demonstrate the advantage relevant to the bounded-input problem considered in this paper.
The proposed controller explicitly incorporates the actuator saturation into its design and accomplishes the prescribed regulation task with an admissible physical input, whereas the original unconstrained controller in \cite{song2017time} may demand an input that exceeds the available actuator amplitude.
}

\section{Conclusion}

This paper investigated the problem of semi-global exact prescribed-time stabilization for linear systems with bounded controls.
{
A novel nested PLE-based approach was first established to construct a parameterized control Lyapunov function whose level set can be made arbitrarily large and the real parts of the poles of the closed-loop system approach minus infinity as the parameter approaches infinity.
By carefully investigating the properties of the nested PLEs, a linear time-varying high-gain feedback controller was designed to guarantee semi-global exact prescribed-time stabilization for any prescribed bounded set of initial conditions and all sufficiently large prescribed settling times.
A numerical example and a comparative simulation with an existing prescribed-time control method were provided to demonstrate the effectiveness of the proposed approach and illustrate its performance under bounded control constraints.

}

\section*{Appendix}

\subsection*{A: The Proof of Lemma \ref{lm1}}

We consider the optimal control problem associated with system (\ref{sys2}) and the following objective function $ J=\int_{0}^{t_{f}}\mathrm{d}t=t_{f} $.
According to the Pontryagin's Minimum Principle, we construct the Hamiltonian function {$ H( x,u,\lambda ) = 1+\lambda(-\alpha_{0}x +u) $}, where $\lambda$ is the costate variable.
The optimal control that minimizes $J$ subject to the terminal constraint $x(t_f)=0$ satisfies
\begin{equation}
    u^{*}(t)=\mathrm{arg}\min_{\vert u \vert\le 1 } H( x^{*},u,\lambda^{*}).
    \label{uop}
\end{equation}
The costate equation is $\dot{\lambda}=-\frac{\partial H}{\partial x}=\alpha_{0} \lambda$, which has the solution $\lambda(t)=\lambda_{0}\mathrm{e}^{\alpha_{0}t}$.
Based on \eqref{uop}, we can get $u^{*}(t)=-\mathrm{sgn}(\lambda)$.
Since $\lambda_{0}=0$ would lead to a trivial solution that cannot satisfy the terminal constraint $x(t_{f})=0$, it is excluded.
If $ x_{0}>0 $, based on \eqref{sys2}, in order to minimize $ t_{f} $, we should minimize $ \dot{x} $, which requires the optimal control $ u^{*}=-1 $ and $ \lambda_{0}>0 $.
If $ x_{0}<0 $, based on \eqref{sys2}, in order to minimize $ t_{f} $, we should maximize $ \dot{x} $, which requires the optimal control $ u^{*}=1 $ and $ \lambda_{0}<0 $.
By taking $ x_{0}<0 $ as an example (the case $x_{0} > 0$ can be considered in the same way), the closed-loop system is $\dot{x}(t)=-\alpha_{0}x(t)+1$ from which we get $x(t_f)=0$ with $ t_{f}=\frac{1}{\alpha_{0}}\ln(1-\alpha_{0}x_{0})$, implying (\ref{eqzb2}) by noting that $x_0 \leq D$.
The proof is finished.
\subsection*{B: The Proof of Theorem \ref{theoP}}
\textbf{\emph{Proof of Item 1}}:
The PLE \eqref{ple0} can be written as (\ref{lyap0}), where $ \tilde{X}=X^{-1} $.
By the Lyapunov equation theory, equation \eqref{lyap0} has a unique positive definite solution if and only if $\alpha( A-(\gamma/2) I_{n}) <0,$ which is always satisfied since $ \gamma>0$.
According to \eqref{ple0}, we have
\begin{equation}
    A+BB^{\mathrm{T}}X=\gamma I_{n}-X^{-1}A^{\mathrm{T}}X,
    \label{i-xax}
\end{equation}
and thus
\begin{equation}
    \lambda_{i}( A+BB^{\mathrm{T}}X ) =\gamma - \lambda_{i}( A ),\; i=1,2,\ldots,n.
    \label{eqeigABBX}
\end{equation}
Similarly,  by denoting $ \tilde{Y}=Y^{-1} $, the PLE \eqref{ple1} can be written as (\ref{lyap00}), which has a unique positive definite solution if and only if $ \alpha( -A-BB^{\mathrm{T}}X- (\gamma /2)I_{n} )=\alpha( A )- \gamma - \gamma/2 <0 $ from \eqref{eqeigABBX}, which always holds due to $ \gamma>0 $.
Since both $X$ and $Y$ are rational matrices from Lemma \ref{lm2}, so is $P$.

According to the PLE \eqref{ple1}, we have
\begin{equation}
A-BB^{\mathrm{T}}P=- \gamma I_{n}-Y^{-1}(A+BB^{\mathrm{T}}X)^{\mathrm{T}}Y,
\label{eqpleABBP}
\end{equation}
from which and (\ref{eqeigABBX}) it follows that
\begin{align*}
    \lambda_{i}(A-BB^{\mathrm{T}}P)=&
    \lambda_{i}(- \gamma I_{n}-Y^{-1}(A+BB^{\mathrm{T}}X)^{\mathrm{T}}Y ) \notag\\
    =& -\gamma -\lambda_{i}(A+BB^{\mathrm{T}}X)\notag\\
    =& \lambda_{i}(A)-2\gamma,\; i=1,2,\ldots,n,
  \end{align*}
which proves (\ref{poles}).

\textbf{\emph{Proof of Item 2}}:
Combining \eqref{ple0} and \eqref{ple1} gives
\begin{equation}
    A^{\mathrm{T}}P+PA-PBB^{\mathrm{T}}P = - \gamma  (X+ Y).
    \label{ARE}
\end{equation}
Since $(A,B)$ is controllable, $P$ is a stabilizing solution (according to (\ref{poles})), and $X+Y>0$, it follows from the well-known Riccati theory (see, for example, \cite{willems03tac}) that $P=Y-X $ is the unique positive definite stabilizing solution to the above Riccati equation (\ref{ARE}).

\textbf{\emph{Proof of Item 3}}: By using (\ref{i-xax}), \eqref{eqeigABBX} and (\ref{eqpleABBP}), we have
\begin{align*}
    \mathrm{tr}( B^{\mathrm{T}}PB ) =& \mathrm{tr}( BB^{\mathrm{T}}P )\\
    =&\mathrm{tr}( A )+n\gamma+\mathrm{tr}( Y^{-1}(A+BB^{\mathrm{T}}X)^{\mathrm{T}}Y )\\
    =&\mathrm{tr}( A )+n\gamma+\mathrm{tr}( A+BB^{\mathrm{T}}X ) \\
    =& \mathrm{tr}( A )+n\gamma-\mathrm{tr}( A )+n\gamma\\
    =& 2n\gamma.
\end{align*}

\textbf{\emph{Proof of Item 4}}:
According to \eqref{ARE}, we have
\begin{align*}
A^{\mathrm{T}}P+PA-PBB^{\mathrm{T}}P  \le -   \gamma  (Y-X) =-  \gamma  P,
\end{align*}
which is the inequality on the right hand side of (\ref{pli}).
Thus it remains to prove the inequality on the left hand side of (\ref{pli}), which is equivalent to $ \lambda_{\max}( ( Y-X )^{-\frac{1}{2}} ( Y+X ) ( Y-X )^{-\frac{1}{2}}  \gamma/(\gamma-\phi(A)))\le k$ for some $k>0$.
Thus we need to prove
\begin{align}
 \lim_{\gamma \to \infty}\lambda_{\max}\left( ( Y+X ) ( Y-X ) ^{-1} \right)  &< \infty, \label{kbig}\\
    \lim_{\gamma \downarrow 0} \lambda_{\max}\left(\frac{\gamma ( Y+X ) ( Y-X ) ^{-1}}{\gamma-\phi(A)}  \right) &< \infty.
    \label{ksmal}
\end{align}
{
We first establish the following lemma.
Since the eigenvalues involved in \eqref{kbig} are invariant under nonsingular state transformations, without loss of generality, we transform $(A,B)$ into its Luenberger canonical form when proving \eqref{kbig}.
}
\begin{lemma}
    Suppose that  $ (A,B) $  is in its Luenberger canonical form.
    Define
\begin{equation*}
    L(\gamma)=\bigoplus_{i=1} ^{m} L_{\mu_{i}}(\gamma), \ L_{k}(\gamma)=\bigoplus_{l=1}^{k}\gamma^{k-l},
\end{equation*}
where $\{\mu_1,\ldots,\mu_{m}\}$ is the controllability index set.
Then
\begin{gather}
    \lim_{\gamma \rightarrow\infty}\gamma^{-1}L^{-1}XL^{-1}=\varPi_{X}^{\infty},
    \label{eqLx}\\
    \lim_{\gamma \rightarrow\infty}\gamma^{-1}L^{-1}YL^{-1}=\varPi_{Y}^{\infty},
    \label{eqLy}\\
    \lim_{\gamma \rightarrow\infty}\gamma^{-1}L^{-1}PL^{-1}=\varPi_{\infty},
    \label{eqLp}
\end{gather}
where $ \varPi_{X}^{\infty}$, $\varPi_{Y}^{\infty} $ and $ \varPi_{\infty} $ are positive definite matrices.
\end{lemma}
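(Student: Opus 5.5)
The plan is to reduce all three limits to limits of solutions of Lyapunov equations whose coefficients converge as $\gamma\to\infty$. Everything rests on the scaling identity for the Luenberger canonical form. Write $A=A_{0}+BD$, where $A_{0}=\bigoplus_{i}A_{\mu_i}$ is block diagonal with each $A_{\mu_i}$ a single upper shift (chain) block, $B$ has unit entries at the last row of each block, and $D$ is a constant matrix. With $L_{k}=\mathrm{diag}(\gamma^{k-1},\ldots,1)$ one checks entrywise that
\[
LA_{0}L^{-1}=\gamma A_{0},\qquad LB=B,\qquad B^{\mathrm{T}}L=B^{\mathrm{T}}.
\]
Moreover $DL^{-1}=O(1)$, since every entry of $L^{-1}$ has the form $\gamma^{-j}$ with $j\ge 0$. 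Hence
\[
\hat A(\gamma):=\gamma^{-1}LAL^{-1}=A_{0}+\gamma^{-1}BDL^{-1}\to A_{0}\quad\text{as }\gamma\to\infty.
\]
This is the step I expect to be the main obstacle. In the genuine Luenberger form the rows of $D$ couple different chains, so the entries of $BDL^{-1}$ must be checked carefully to be bounded. If some term grew like $\gamma^{j}$ with $j\ge1$, I would reorder the blocks by nondecreasing controllability index, or use the Brunovsky form obtained by a constant feedback $A\mapsto A-BK$, so that only bounded terms remain. If such a reordering or feedback transformation is needed, I would need to track how it affects $X$ and $Y$ themselves.

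\emph{The limit (\ref{eqLx}).} Set $W_{X}=\gamma L\tilde XL$, so that $\gamma^{-1}L^{-1}XL^{-1}=W_{X}^{-1}$. Multiplying (\ref{lyap0}) on the left by $L$, on the right by $L$, and dividing by $\gamma$ gives
\[
\big(\hat A-\tfrac12 I_{n}\big)W_{X}+W_{X}\big(\hat A-\tfrac12 I_{n}\big)^{\mathrm{T}}=-BB^{\mathrm{T}}.
\]
The limiting equation, with $A_{0}$ in place of $\hat A$, has a unique positive definite solution $W_{X}^{\infty}$, because $A_{0}-\frac12 I_{n}$ is Hurwitz and $(A_{0},B)$ is controllable. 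Since the solution of a Lyapunov equation with a Hurwitz coefficient depends continuously on the coefficient, $W_{X}\to W_{X}^{\infty}$. Therefore $\varPi_{X}^{\infty}=(W_{X}^{\infty})^{-1}>0$.

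\emph{The limit (\ref{eqLy}).} Set $W_{Y}=\gamma L\tilde YL$ and use $LBB^{\mathrm{T}}XL^{-1}=BB^{\mathrm{T}}L^{-1}XL^{-1}$. Scaling (\ref{lyap00}) the same way gives a Lyapunov equation with coefficient
\[
\hat A+BB^{\mathrm{T}}\big(\gamma^{-1}L^{-1}XL^{-1}\big)+\tfrac12 I_{n}\;\to\;A_{0}+BB^{\mathrm{T}}\varPi_{X}^{\infty}+\tfrac12 I_{n}.
\]
By (\ref{eqeigABBX}), the eigenvalues of $\gamma^{-1}L(A+BB^{\mathrm{T}}X)L^{-1}$ equal $1-\lambda_i(A)/\gamma\to1$. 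So the limiting coefficient has all eigenvalues equal to $3/2$, i.e., it is anti-Hurwitz. Together with controllability, this gives a unique positive definite limit $W_{Y}^{\infty}$ and convergence $W_{Y}\to W_{Y}^{\infty}$. Hence $\varPi_{Y}^{\infty}=(W_{Y}^{\infty})^{-1}>0$.

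\emph{The limit (\ref{eqLp}).} Existence follows at once, with $\varPi_{\infty}=\varPi_{Y}^{\infty}-\varPi_{X}^{\infty}$. Positive definiteness does not pass automatically to the limit, since a limit of positive definite matrices is only positive semidefinite. To obtain it, I scale the Riccati equation (\ref{ARE}) in the same way, so that $\varPi_{\infty}$ satisfies
\[
A_{0}^{\mathrm{T}}\varPi_{\infty}+\varPi_{\infty}A_{0}-\varPi_{\infty}BB^{\mathrm{T}}\varPi_{\infty}=-(\varPi_{X}^{\infty}+\varPi_{Y}^{\infty})<0.
\]
By (\ref{poles}), $A_{0}-BB^{\mathrm{T}}\varPi_{\infty}$ is the limit of $\gamma^{-1}L(A-BB^{\mathrm{T}}P)L^{-1}$, whose eigenvalues are $\lambda_i(A)/\gamma-2\to-2$; hence it is Hurwitz. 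So $\varPi_{\infty}$ is a stabilizing solution of a controllable algebraic Riccati equation with negative definite right-hand side. Exactly as in the proof of Item 2, the Riccati theory then gives $\varPi_{\infty}>0$.
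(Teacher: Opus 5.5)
Your proposal is correct and follows the paper's proof essentially step for step. You use the same scaling $\gamma^{-1}L(\cdot)L^{-1}$ to turn (\ref{lyap0}) and (\ref{lyap00}) into Lyapunov equations whose coefficients converge and stay Hurwitz or anti-Hurwitz. You then obtain $\varPi_\infty>0$ from the limiting Riccati equation together with its stabilizing property; you get that property from (\ref{poles}), whereas the paper gets it from the limit of the scaled $Y$-equation, and the two are equivalent.

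The step you flagged as the main obstacle is already settled by your own observation, and more explicitly than in the paper, which only asserts $\gamma^{-1}LAL^{-1}\to\varPsi$. The row $BD$ is constant and supported on the last row of each chain, where $L$ equals $1$, and every entry of $L^{-1}$ is $\gamma^{-j}$ with $j\ge 0$. So $\gamma^{-1}BDL^{-1}=O(\gamma^{-1})$ however $D$ couples the chains, and no reordering or feedback transformation is needed.
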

{
\begin{proof}
By applying a nonsingular input transformation and removing redundant input channels, we may assume without loss of generality that \(B\) has full column rank.
Since $(A,B)$ is in its Luenberger canonical form, we can write
\[
B=\left(\bigoplus_{i=1}^{m}e_{\mu_i}\right)\varGamma,
\]
where $\varGamma\in\mathbf{R}^{m\times m}$ is nonsingular and $ e_{\mu_{i}}=[0,\cdots,0,1]^\mathrm{T} \in \mathbf{R}^{\mu_i} $.
By the definition of $L(\gamma)$, we have $LB=B$.
Define
\begin{equation}
    \mathcal{A}=\gamma^{-1}LAL^{-1},
    \quad
    \varPi_X=\gamma^{-1}L^{-1}XL^{-1}.
    \label{defPiX}
\end{equation}
Premultiplying and postmultiplying \eqref{ple0} by
$\gamma^{-1}L^{-1}$ gives
\begin{equation}
    \mathcal{A}^{\mathrm T}\varPi_X
    +\varPi_X\mathcal{A}
    +\varPi_XBB^{\mathrm T}\varPi_X
    =\varPi_X.
    \label{eqPiX}
\end{equation}
By the structure of the Luenberger canonical form,
\[
    \lim_{\gamma\to\infty}\mathcal{A}
    =\varPsi
    \triangleq
    \bigoplus_{i=1}^{m}\varPsi_i,
\]
where
\[
    \varPsi_i=
    \begin{bmatrix}
    0&1&&\\
    \vdots&&\ddots&\\
    0&&&1\\
    0&0&\cdots&0
    \end{bmatrix}
    \in\mathbf{R}^{\mu_i\times\mu_i}.
\]
Moreover, $(\varPsi,B)$ is controllable.
Let $W_X=\varPi_X^{-1}$. Multiplying \eqref{eqPiX} from the left and right by $W_X$ yields
\begin{equation}
    \left(\mathcal{A}-\frac{1}{2}I_n\right)W_X
    +W_X\left(\mathcal{A}-\frac{1}{2}I_n\right)^{\mathrm T}
    =-BB^{\mathrm T}.
    \label{eqWX}
\end{equation}
Since $\mathcal{A}$ is similar to $A/\gamma$, $ \lambda_i(\mathcal{A})=\lambda_i(A)/\gamma $.
By Assumption~\ref{ass1}, $\Re\{\lambda_i(A)\}\le0$; hence $\mathcal{A}-\frac{1}{2}I_n$ is Hurwitz for every $\gamma>0$.
Therefore, \eqref{eqWX} has the unique positive definite solution
\[
    W_X=
    \int_0^\infty
    e^{(\mathcal{A}-\frac{1}{2}I_n)t}
    BB^{\mathrm T}
    e^{(\mathcal{A}-\frac{1}{2}I_n)^{\mathrm T}t}\,{\rm d}t.
\]
Since $\mathcal{A}-\frac{1}{2}I_n \to\varPsi-\frac{1}{2}I_n$ as $\gamma\to\infty$, continuity of the solution of the Lyapunov equation gives $ W_X\to W_X^\infty $, where $W_X^\infty$ is the unique solution of
\[
    \left(\varPsi-\frac{1}{2}I_n\right)W_X^\infty
    +W_X^\infty
    \left(\varPsi-\frac{1}{2}I_n\right)^{\mathrm T}
    =-BB^{\mathrm T}.
\]
Because $(\varPsi,B)$ is controllable, $W_X^\infty>0$.
Consequently, $ \varPi_X \to \varPi_X^\infty\triangleq(W_X^\infty)^{-1}>0 $, which proves \eqref{eqLx}.

Next, define $ A_X=\mathcal{A}+BB^{\mathrm T}\varPi_X, \ \varPi_Y=\gamma^{-1}L^{-1}YL^{-1} $.
It follows from \eqref{eqPiX} that $ A_X=I_n-\varPi_X^{-1}\mathcal{A}^{\mathrm T}\varPi_X $, and hence
\begin{equation}
    \lambda_i(A_X)
    =1-\frac{\lambda_i(A)}{\gamma},
    \qquad i=1,\ldots,n.
    \label{eigAX}
\end{equation}
Similarly, transforming \eqref{ple1} gives
\begin{equation}
    A_X^{\mathrm T}\varPi_Y
    +\varPi_YA_X
    -\varPi_YBB^{\mathrm T}\varPi_Y
    =-\varPi_Y.
    \label{eqPiY}
\end{equation}
Let $W_Y=\varPi_Y^{-1}$.
Then \eqref{eqPiY} is equivalent to
\begin{equation}
    \left(-A_X-\frac{1}{2}I_n\right)W_Y
    +W_Y\left(-A_X-\frac{1}{2}I_n\right)^{\mathrm T}
    =-BB^{\mathrm T}.
    \label{eqWY}
\end{equation}
By \eqref{eigAX} and Assumption~\ref{ass1}, $-A_X-\frac{1}{2}I_n$ is Hurwitz.
Moreover, $ A_X\to A_X^\infty \triangleq \varPsi+BB^{\mathrm T}\varPi_X^\infty $, and $ \lambda_i(A_X^\infty)=1,\ i=1,\ldots,n $.
Since $A_X^\infty$ is obtained from $\varPsi$ by state feedback, $(A_X^\infty,B)$ remains controllable.
Therefore, by the same Lyapunov-equation argument as above, $ W_Y\to W_Y^\infty>0 $, and consequently $ \varPi_Y\to \varPi_Y^\infty\triangleq(W_Y^\infty)^{-1}>0 $.
This proves \eqref{eqLy}.

It remains to show that the limit associated with $P=Y-X$ is positive definite.
Define $\varPi_\infty=\varPi_Y^\infty-\varPi_X^\infty $.
Passing to the limit in \eqref{eqPiX} and \eqref{eqPiY}, and subtracting the resulting equations, gives
\begin{equation}
    \varPsi^{\mathrm T}\varPi_\infty
    +\varPi_\infty\varPsi
    -\varPi_\infty BB^{\mathrm T}\varPi_\infty
    =-\varPi_Y^\infty-\varPi_X^\infty.
    \label{eqPiInf}
\end{equation}
Furthermore, from the limiting form of \eqref{eqPiY}, $ A_X^\infty-BB^{\mathrm T}\varPi_Y^\infty = -I_n-(\varPi_Y^\infty)^{-1} (A_X^\infty)^{\mathrm T}\varPi_Y^\infty $, which implies $ \lambda_i \left( A_X^\infty-BB^{\mathrm T}\varPi_Y^\infty \right) =-2,\ i=1,\ldots,n $.
Since $ A_X^\infty-BB^{\mathrm T}\varPi_Y^\infty = \varPsi-BB^{\mathrm T}\varPi_\infty $,  $\varPi_\infty$ is a stabilizing solution of \eqref{eqPiInf}.
Because $(\varPsi,B)$ is controllable and $\varPi_Y^\infty+\varPi_X^\infty>0$, the standard algebraic Riccati equation theory implies that $\varPi_\infty$ is the unique positive definite stabilizing solution of \eqref{eqPiInf}.
Hence $ \varPi_\infty>0 $.
Finally, since $ \gamma^{-1}L^{-1}PL^{-1} =\varPi_Y-\varPi_X $, we obtain $ \lim_{\gamma\to\infty} \gamma^{-1}L^{-1}PL^{-1} =\varPi_\infty>0 $, which proves \eqref{eqLp}.
\end{proof}}

Based on \eqref{eqLx}, \eqref{eqLy} and \eqref{eqLp}, we have $ \lim_{\gamma \to \infty}\lambda_{\max} $ $( ( Y+X )( Y-X )^{-1} )   =\lambda_{\max}( ( \varPi_{Y}^{\infty}+\varPi_{X}^{\infty} )( \varPi_{Y}^{\infty}-\varPi_{X}^{\infty} ) ^{-1})$, which indicates that \eqref{kbig} is proven.

{
We next establish the following lemma, which characterizes the small-$\gamma$ asymptotic behavior of $X$, $Y$, and $P$.
Similarly, since the eigenvalues involved in \eqref{ksmal} are invariant under nonsingular state transformations, we may work in the possibly complex Jordan coordinates of $A$ without loss of generality.
The corresponding transformed matrices are Hermitian, so $(\cdot)^{\mathrm H}$ is used below.
}
\begin{lemma}
Suppose that $A=J_{0} \oplus J_{1} \oplus \cdots \oplus J_{s}$, where $J_0 \in \mathbf{C}^{p_{0} \times p_{0}} $ is Hurwitz, {and, for $i=1,\ldots,s$, $J_i$ is the block Jordan matrix associated with a distinct imaginary-axis eigenvalue $\lambda_i$, namely, $\lambda(J_i)=\{\lambda_i\}$ and $\lambda_i\neq\lambda_j$ for $i\neq j$.
}
Denote
\[
    \mathcal{L}=\bigoplus_{i=0}^{s}\mathcal{L}_{i}, \;  \mathcal{L}_{0}=\gamma^{-\frac{1}{2}}I_{p_{0}},\; \mathcal{L}_{i}=\bigoplus_{j=1}^{q_{i}} L_{p_{ij}}(\gamma) ,
\]
and $\mathscr{L}=\oplus_{i=0}^{s}\mathscr{L}_{i},\mathscr{L}_{0}=I_{p_{0}},\mathscr{L}_{i}=\mathcal{L}_{i}$, where $\{p_{i1},\cdots,$ $p_{i,q_{i}}\}$ are the sizes of the Jordan matrices in $J_{i}$.
Then
\begin{gather}
    \lim_{\gamma \downarrow 0}\gamma^{-1}\mathcal{L}^{-1}X\mathcal{L}^{-1}=\bigoplus_{i=0}^{s} \varXi_{ii}\triangleq \varXi,
    \label{eqmLx}\\
    \lim_{\gamma \downarrow 0}\gamma^{-1}\mathcal{L}^{-1}Y\mathcal{L}^{-1}=\bigoplus_{i=0}^{s}\varTheta_{ii}\triangleq \varTheta,
    \label{eqmLy}\\
    \lim_{\gamma \downarrow 0}\gamma^{-1}\mathscr{L}^{-1}P\mathscr{L}^{-1}=\bigoplus_{i=0}^{s} \varPhi_{ii}\triangleq \varPhi,
    \label{eqmLp}
\end{gather}
where $ \varXi_{ii} $, $ \varTheta_{ii} $ and $ \varPhi_{ii} $ are positive definite constant matrices satisfying $ \varTheta_{00}=\varXi_{00} $ and $ \varTheta_{ii}>\varXi_{ii}, i=1,2,\ldots,s $.
\end{lemma}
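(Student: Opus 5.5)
The approach is to work throughout with the inverses $\tilde X=X^{-1}$ and $\tilde Y=Y^{-1}$, which satisfy the linear equations (\ref{lyap0}) and (\ref{lyap00}). I would rescale them by $\mathcal{L}$ and pass to the limit $\gamma\downarrow0$ block by block in the Jordan coordinates. Let $B=[B_0^{\mathrm H},\ldots,B_s^{\mathrm H}]^{\mathrm H}$ be partitioned conformally with $A$, and set $W_X=\gamma\mathcal{L}\tilde X\mathcal{L}$, so that $\gamma^{-1}\mathcal{L}^{-1}X\mathcal{L}^{-1}=W_X^{-1}$. Multiplying (\ref{lyap0}) on both sides by $\gamma\mathcal{L}$ then gives a Lyapunov equation for $W_X$.

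\textbf{Hurwitz block $J_0$.} Here $\mathcal{L}_0=\gamma^{-1/2}I$, so the $(0,0)$ block of the transformed equation is
\[
(J_0-\tfrac{\gamma}{2}I)W_{00}+W_{00}(J_0-\tfrac{\gamma}{2}I)^{\mathrm H}=-B_0B_0^{\mathrm H},
\]
and its limit is $J_0W_{00}+W_{00}J_0^{\mathrm H}=-B_0B_0^{\mathrm H}$.

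\textbf{Imaginary-axis blocks $J_i$, $i\ge1$.} Write $J_i=\lambda_iI+N_i$ with $N_i$ nilpotent. The scaling gives $\mathcal{L}_iN_i\mathcal{L}_i^{-1}=\gamma N_i$. The terms $\lambda_i+\bar\lambda_i$ cancel because $\lambda_i$ is purely imaginary, so after dividing by $\gamma$ the $(i,i)$ block becomes
\[
(N_i-\tfrac12I)W_{ii}+W_{ii}(N_i-\tfrac12I)^{\mathrm H}=-\bar B_i\bar B_i^{\mathrm H}+o(1),
\]
where $\bar B_i=\lim_{\gamma\downarrow0}\mathcal{L}_iB_i$ keeps only the rows of $B_i$ at the bottoms of the Jordan chains. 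By the PBH test, controllability of $(A,B)$ makes those rows linearly independent. Hence $(N_i,\bar B_i)$ is controllable, and the limit $W_{ii}^\infty$ is positive definite.

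\textbf{Off-diagonal blocks.} For $i\neq j$ with $i,j\ge1$, the coefficient $(\lambda_i-\bar\lambda_j)$ is $O(1)$ and nonzero, while the forcing term is $O(\gamma)$, so $W_{ij}\to0$. The mixed blocks $(0,i)$ carry an extra factor $\gamma^{1/2}$ and also vanish. Continuity of the Lyapunov solution then yields (\ref{eqmLx}), with $\varXi=\bigoplus_i (W_{ii}^\infty)^{-1}$.

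\textbf{The matrix $Y$.} The same procedure applies to (\ref{lyap00}), after computing the limit of the scaled closed-loop matrix $\mathcal{L}(A+BB^{\mathrm T}X)\mathcal{L}^{-1}$. In block $0$ the feedback term is $O(1)$, with limit $J_0+B_0B_0^{\mathrm H}\varXi_{00}$. In blocks $i\ge1$ it is $O(\gamma)$. The limiting equations are therefore as follows.
\begin{itemize}
\item Block $0$: $(J_0+B_0B_0^{\mathrm H}\varXi_{00})^{\mathrm H}\varTheta_{00}+\varTheta_{00}(\cdots)-\varTheta_{00}B_0B_0^{\mathrm H}\varTheta_{00}=0$. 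Substituting $\varTheta_{00}=\varXi_{00}$ reproduces the limiting $X$-equation $J_0^{\mathrm H}\varXi_{00}+\varXi_{00}J_0+\varXi_{00}B_0B_0^{\mathrm H}\varXi_{00}=0$, so $\varXi_{00}$ is a solution. By (\ref{eqeigABBX}), the eigenvalues of $J_0+B_0B_0^{\mathrm H}\varXi_{00}$ are $-\bar\lambda(J_0)$. Hence the corresponding Lyapunov equation for the inverse is Hurwitz and its solution is unique, which gives $\varTheta_{00}=\varXi_{00}$.
\item Blocks $i\ge1$: $\varTheta_{ii}$ solves a Riccati equation built on $N_i+\bar B_i\bar B_i^{\mathrm H}\varXi_{ii}$ with right-hand side $-\varTheta_{ii}$. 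Subtracting the $\varXi_{ii}$-equation shows that $\varTheta_{ii}-\varXi_{ii}$ is the stabilizing solution of a Riccati equation with right-hand side $-(\varTheta_{ii}+\varXi_{ii})<0$. This is the same argument as in the proof of (\ref{eqLp}), and it gives $\varTheta_{ii}>\varXi_{ii}$.
\end{itemize}
This establishes (\ref{eqmLy}) and, for $i\ge1$, the $i$-th blocks of (\ref{eqmLp}) with $\varPhi_{ii}=\varTheta_{ii}-\varXi_{ii}$, since $\mathscr{L}_i=\mathcal{L}_i$.

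\textbf{Block $0$ of $P$ (main obstacle).} Because $\varTheta_{00}=\varXi_{00}$, the leading terms cancel: $P_{00}=Y_{00}-X_{00}=o(1)$, whereas $\mathscr{L}_0=I$ demands $P_{00}/\gamma\to\varPhi_{00}>0$. I would not expand $X$ and $Y$ to second order separately. Instead I would use the Riccati identity (\ref{ARE}) directly. Its right-hand side has $(0,0)$ block $-\gamma(X_{00}+Y_{00})=-2\gamma\varXi_{00}+o(\gamma)$. Since $P\to0$, the quadratic term $PBB^{\mathrm T}P$ is $o(\gamma)$ in this block, provided the mixed blocks $P_{0i}$ are suitably small. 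Then $\gamma^{-1}P_{00}$ solves, up to $o(1)$, the equation $J_0^{\mathrm H}\varPhi_{00}+\varPhi_{00}J_0=-2\varXi_{00}$, whose solution is unique and positive definite because $J_0$ is Hurwitz.

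The delicate point is an a priori bound showing that the mixed blocks of $\gamma^{-1}\mathscr{L}^{-1}P\mathscr{L}^{-1}$ tend to zero. I would try first to obtain it from the off-diagonal blocks of (\ref{ARE}) in the scaled coordinates. There the coefficient $\lambda(J_0)$ versus $\lambda_i$ separation is $O(1)$ and nonzero, and this should force those blocks to vanish in the limit. With that bound in hand, (\ref{eqmLp}) follows.
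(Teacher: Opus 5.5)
\textbf{Where the argument breaks.} Your analysis of $X$ is the paper's argument: scaled inverse, an exactly block-diagonal coefficient $\tilde J=\mathcal{L}J\mathcal{L}^{-1}$, and PBH for the limiting pairs. There is one slip: the coefficient of the $(i,j)$ block is $\lambda_i+\bar\lambda_j=\lambda_i-\lambda_j$, not $\lambda_i-\bar\lambda_j$; the latter vanishes for a conjugate pair.

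The genuine gap is the claim that ``the same procedure applies to \eqref{lyap00}''. The procedure worked for $X$ only because $\tilde J$ is block diagonal. The scaled closed-loop matrix $F=\tilde J+\gamma\mathcal{L}BB^{\mathrm H}\mathcal{L}\mathscr{X}$, with $\mathscr{X}=W_X^{-1}$, is not. Its blocks $F_{0i}$ and $F_{i0}$ are of order $\gamma^{1/2}$ (not smaller in general), and so are the $(0,i)$ blocks of $\mathscr{X}$ and of the scaled $Y$. For instance, $\mathscr{X}_{0i}=\gamma^{1/2}\xi_{0i}+o(\gamma^{1/2})$ with $\xi_{0i}=\varXi_{00}(J_0+\bar\lambda_iI)^{-1}B_0\bar B_i^{\mathrm H}\varXi_{ii}\neq0$ in general. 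This has two consequences:
\begin{enumerate}
\item $\gamma^{-1}(F_{ii}-\lambda_iI)\to N_i+\bar B_iU_i\bar B_i^{\mathrm H}\varXi_{ii}$ with $U_i=I+B_0^{\mathrm H}\varXi_{00}(J_0+\bar\lambda_iI)^{-1}B_0$, not $N_i+\bar B_i\bar B_i^{\mathrm H}\varXi_{ii}$.
\item Products of the order-$\gamma^{1/2}$ off-diagonal blocks of $F$ and of the scaled $Y$ contribute $O(\gamma)$ terms to the $(i,i)$ block equation. That is exactly the order you keep after dividing by $\gamma$, so the limiting block equations do not decouple.
\end{enumerate}
For $A=\mathrm{diag}(-1,0)$ and $B=[1,1]^{\mathrm T}$, one finds $(A+BB^{\mathrm T}X)_{22}=-\gamma(1+\gamma)$, whereas your formula predicts $+\gamma$. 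The block-$1$ limiting Riccati equation built on this true diagonal entry, with the coupling dropped, has no positive solution.

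Your final Riccati equation for $\varTheta_{ii}$ is nevertheless correct; here it gives the true value $\varTheta_{11}=3$. Establishing it, however, requires two further steps. First, eliminate the $(0,i)$ blocks by a Schur complement. Second, show that $U_i$ is unitary, which follows from $J_0^{\mathrm H}\varXi_{00}+\varXi_{00}J_0+\varXi_{00}B_0B_0^{\mathrm H}\varXi_{00}=0$ and $\Re\lambda_i=0$. Neither step is in your proposal. Without them, existence and positive definiteness of $\lim\gamma^{-1}\mathcal{L}^{-1}Y\mathcal{L}^{-1}$ are not established.

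\textbf{How the paper avoids this.} The paper never uses the closed-loop matrix. With $Z=XY^{-1}X$ and $D=X-Z$, it derives $(J-\frac{3\gamma}{2}I)^{\mathrm H}D+D(J-\frac{3\gamma}{2}I)=-2\gamma X$. The coefficient is again block diagonal and the right-hand side is the already-analysed $X$, so the scaled $D$ decouples exactly. This gives $\mathscr{D}_{00}\to0$ and $\mathcal{D}_{ii}>0$. Then $\varTheta=\varXi\mathcal{Z}^{-1}\varXi$ with $\mathcal{Z}_{ii}=\varXi_{ii}-\mathcal{D}_{ii}$ yields both $\varTheta_{00}=\varXi_{00}$ and $\varTheta_{ii}>\varXi_{ii}$.

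\textbf{Your treatment of $P$.} This part is closer to complete. The $(0,0)$ block needs no bound on the mixed blocks. In Jordan coordinates, $PBB^{\mathrm H}P\le2n\gamma P$ by Item 3. So once $\varTheta_{00}=\varXi_{00}$ is known, $(PBB^{\mathrm H}P)_{00}\le2n\gamma P_{00}=o(\gamma)$, and $\gamma^{-1}P_{00}\to\varPhi_{00}$ follows directly. The paper instead uses rationality to get $P_{00}=O(\gamma)$.

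For the mixed blocks, the a priori bound you are looking for comes from positivity, not from the off-diagonal equations. The matrix $\gamma^{-1}\mathscr{L}^{-1}P\mathscr{L}^{-1}>0$ has convergent diagonal blocks, so its off-diagonal blocks are bounded. After that, your spectral-separation argument on subsequential limits of the scaled \eqref{ARE} works, as in the paper.
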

{
\begin{proof}
Since a complex Jordan form is used below, $(\cdot)^{\mathrm H}$  denotes the conjugate transpose in this proof.
Partition
$B=[B_0^{\mathrm T}\ B_1^{\mathrm T}\ \cdots\ B_s^{\mathrm T}]^{\mathrm T}$ conformably with $A=J_0\oplus J_1\oplus\cdots\oplus J_s$.
For $i=1,\ldots,s$, write $J_i=J_{i1}\oplus\cdots\oplus J_{i,q_i}$ and $J_i=\lambda_i I_{p_i}+N_i$, where $N_i$ is block diagonal and each diagonal block is the nilpotent part of the corresponding Jordan block.
Denote $\mathcal B_i=\lim_{\gamma\downarrow0}\mathcal L_iB_i$.
When $p_0=0$, all statements involving the $0$-th block below are omitted.
The scaling argument follows the small-parameter analysis of PLEs in \cite{li2025derivative}, while the details needed here are given explicitly.

We first prove \eqref{eqmLx}.
Define $\mathscr X=\gamma^{-1}\mathcal L^{-1}X\mathcal L^{-1}=[\mathscr X_{ij}]$.
Since $\tilde X=X^{-1}$, equation \eqref{lyap0}, expressed in the Jordan coordinates, becomes $-J\tilde X-\tilde XJ^{\mathrm H}+\gamma\tilde X=BB^{\mathrm H}$.
Let $\tilde J=\mathcal LJ\mathcal L^{-1}$.
Since $\mathscr X^{-1}=\gamma\mathcal L\tilde X\mathcal L$, the above equation gives $ -\tilde J\mathscr X^{-1} -\mathscr X^{-1}\tilde J^{\mathrm H} +\gamma\mathscr X^{-1} =\gamma\mathcal LBB^{\mathrm H}\mathcal L $.
By the definition of $\mathcal L$, we have $\tilde J_0=J_0$ and $\tilde J_i=\lambda_iI_{p_i}+\gamma N_i$, $i=1,\ldots,s$.

Expanding the above equation blockwise and letting $\gamma\downarrow0$ shows that all off-diagonal blocks of $\mathscr X^{-1}$ converge to zero.
For $i\ne j$ with $i,j\ne0$, this follows from $\lambda_i\ne\lambda_j$.
For a block involving $J_0$, it follows from the spectral separation between the Hurwitz matrix $J_0$ and the imaginary-axis eigenvalues $\lambda_i$.

For the $(0,0)$ block, the limiting equation is $ -J_0\varXi_{00}^{-1} -\varXi_{00}^{-1}J_0^{\mathrm H} =B_0B_0^{\mathrm H} $.
For $i=1,\ldots,s$, after dividing the corresponding diagonal block equation by $\gamma$ and letting $\gamma\downarrow0$, we obtain $ -N_i\varXi_{ii}^{-1} -\varXi_{ii}^{-1}N_i^{\mathrm H} +\varXi_{ii}^{-1} =\mathcal B_i\mathcal B_i^{\mathrm H} $.
By controllability of $(A,B)$ and the PBH test, the corresponding limiting block pairs are controllable.
Since $J_0$ and $N_i-\frac{1}{2}I_{p_i}$ are Hurwitz, the Lyapunov equation theory gives $\varXi_{ii}^{-1}>0$, $i=0,1,\ldots,s$.
Consequently, $\mathscr X^{-1}\to\bigoplus_{i=0}^{s}\varXi_{ii}^{-1}$ and hence $\mathscr X\to\varXi=\bigoplus_{i=0}^{s}\varXi_{ii}>0$.
This proves \eqref{eqmLx}.

We next prove \eqref{eqmLy}. Let $\tilde Y=Y^{-1}$ and define $Z=XY^{-1}X$ and $D=X-Z$. Since $P=Y-X>0$ by Item~2, we have $Y>X>0$ and hence $Y^{-1}<X^{-1}$.
Therefore, $0<Z<X$ and $D>0$.

By \eqref{i-xax}, $J+BB^{\mathrm H}X=\gamma I_n-X^{-1}J^{\mathrm H}X$.
Substituting this relation into \eqref{lyap00}, and then premultiplying and postmultiplying the resulting equation by $X$, gives $ J^{\mathrm H}Z+ZJ-3\gamma Z=-XBB^{\mathrm H}X $.
On the other hand, \eqref{ple0} can be written as $J^{\mathrm H}X+XJ-\gamma X=-XBB^{\mathrm H}X$.
Subtracting the preceding two equations and using $D=X-Z$ gives
\[
\left(J-\frac{3\gamma}{2}I_n\right)^{\mathrm H}D
+D\left(J-\frac{3\gamma}{2}I_n\right)
=-2\gamma X.
\]

Define $\mathscr D=\gamma^{-1}\mathcal L^{-1}D\mathcal L^{-1}$.
Since $0<D<X$, we have $0<\mathscr D<\mathscr X$; hence $\mathscr D$ is bounded as $\gamma\downarrow0$.
With $\tilde J=\mathcal LJ\mathcal L^{-1}$, the scaled form of the above Lyapunov equation is $ \tilde J^{\mathrm H}\mathscr D+\mathscr D\tilde J -3\gamma\mathscr D=-2\gamma\mathscr X $.

For the $(0,0)$ block, letting $\gamma\downarrow0$ gives $J_0^{\mathrm H}\mathscr D_{00}+\mathscr D_{00}J_0\to0$.
Since $J_0$ is Hurwitz, $\mathscr D_{00}\to0$.
For $i=0$, $j\ne0$, spectral separation gives $\mathscr D_{0j}\to0$, and similarly $\mathscr D_{i0}\to0$.
For $i\ne j$ with $i,j\ne0$, the limiting coefficient is $\bar\lambda_i+\lambda_j=\lambda_j-\lambda_i\ne0$, and hence $\mathscr D_{ij}\to0$.

For $i=j\ne0$, after dividing the corresponding block equation by $\gamma$, its coefficient matrix converges to $N_i-\frac{3}{2}I_{p_i}$, which is Hurwitz.
Therefore, by continuity and uniqueness of the solution to the Lyapunov equation, $\mathscr D_{ii}$ converges as $\gamma\downarrow0$.
Denote its limit by $\mathcal D_{ii}$.
Dividing the corresponding block equation by $\gamma$ and letting $\gamma\downarrow0$ gives
\[
\left(N_i-\frac{3}{2}I_{p_i}\right)^{\mathrm H}\mathcal D_{ii}
+\mathcal D_{ii}\left(N_i-\frac{3}{2}I_{p_i}\right)
=-2\varXi_{ii}.
\]
Since $N_i-\frac{3}{2}I_{p_i}$ is Hurwitz and $\varXi_{ii}>0$, the Lyapunov equation theory gives $\mathcal D_{ii}>0$.

Now define $\mathscr Z=\gamma^{-1}\mathcal L^{-1}Z\mathcal L^{-1}$.
Since $\mathscr Z=\mathscr X-\mathscr D$, it follows that $\mathscr Z\to\mathcal Z=\bigoplus_{i=0}^{s}\mathcal Z_{ii}$, where $\mathcal Z_{00}=\varXi_{00}$ and $\mathcal Z_{ii}=\varXi_{ii}-\mathcal D_{ii}$, $i=1,\ldots,s$.

It remains to show that $\mathcal Z_{ii}>0$ for $i\ge1$.
The limiting $(i,i)$ block of the scaled equation for $Z$ gives
\[
\left(N_i-\frac{3}{2}I_{p_i}\right)^{\mathrm H}\mathcal Z_{ii}
+\mathcal Z_{ii}\left(N_i-\frac{3}{2}I_{p_i}\right)
=-\varXi_{ii}\mathcal B_i\mathcal B_i^{\mathrm H}\varXi_{ii}.
\]
On the other hand, the limiting equation for $\varXi_{ii}$ can be equivalently written as
\[
\left(N_i-\frac{1}{2}I_{p_i}\right)^{\mathrm H}\varXi_{ii}
+\varXi_{ii}\left(N_i-\frac{1}{2}I_{p_i}\right)
=-\varXi_{ii}\mathcal B_i\mathcal B_i^{\mathrm H}\varXi_{ii}.
\]
Since $\varXi_{ii}>0$, the latter Lyapunov equation implies that $(\mathcal B_i^{\mathrm H}\varXi_{ii}, N_i-\frac{1}{2}I_{p_i})$ is observable.
Observability is unchanged by a scalar shift of the state matrix;
therefore, $(\mathcal B_i^{\mathrm H}\varXi_{ii}, N_i-\frac{3}{2}I_{p_i})$ is also observable.
The preceding Lyapunov equation for $\mathcal Z_{ii}$ then gives $\mathcal Z_{ii}>0$.
Since $\mathcal D_{ii}>0$, we further obtain $0<\mathcal Z_{ii}<\varXi_{ii}$.

Define $\mathscr Y=\gamma^{-1}\mathcal L^{-1}Y\mathcal L^{-1}$.
Since $Z=XY^{-1}X$, we have $\mathscr Z=\mathscr X\mathscr Y^{-1}\mathscr X$.
Consequently, $\mathscr Y\to \varTheta=\varXi\mathcal Z^{-1}\varXi =\bigoplus_{i=0}^{s}\varTheta_{ii}>0$.
Since $\mathcal Z_{00}=\varXi_{00}$, we have $\varTheta_{00}=\varXi_{00}$.
For $i=1,\ldots,s$, the relation $0<\mathcal Z_{ii}<\varXi_{ii}$ implies $\mathcal Z_{ii}^{-1}>\varXi_{ii}^{-1}$, and hence $\varTheta_{ii} =\varXi_{ii}\mathcal Z_{ii}^{-1}\varXi_{ii} >\varXi_{ii}$.
This proves \eqref{eqmLy}.

Finally, we prove \eqref{eqmLp}.
Define $\mathscr P=\gamma^{-1}\mathscr L^{-1} P\mathscr L^{-1}=[\mathscr P_{ij}]$.
For $i=1,\ldots,s$, since $\mathscr L_i=\mathcal L_i$, \eqref{eqmLx} and \eqref{eqmLy} give $\mathscr P_{ii}\to \varPhi_{ii}=\varTheta_{ii}-\varXi_{ii}>0$.

For the stable block, $\mathscr X_{00}=X_{00}\to\varXi_{00}$ and $\mathscr Y_{00}=Y_{00}\to \varTheta_{00}=\varXi_{00}$, so $P_{00}=Y_{00}-X_{00}\to0$.
Since each entry of $P_{00}$ is a rational function of $\gamma$ and $P_{00}\to0$ as $\gamma\downarrow0$, its Laurent expansion at $\gamma=0$ starts from a positive integer power of $\gamma$.
Hence $P_{00}=O(\gamma)$, and $P_{00}/\gamma$ has a finite limit; denote it by $\varPhi_{00}$.

Using the definitions of $\mathscr X$, $\mathscr Y$ and $\mathscr P$, equation \eqref{ARE} becomes $ \mathscr L^{-1}J^{\mathrm H}\mathscr L\mathscr P +\mathscr P\mathscr LJ\mathscr L^{-1} -\gamma\mathscr P\mathscr LBB^{\mathrm H} \mathscr L\mathscr P  = -\gamma\mathscr L^{-1}\mathcal L (\mathscr X+\mathscr Y) \mathcal L\mathscr L^{-1} $.
Since $\mathscr P>0$ and all its diagonal blocks are bounded, its off-diagonal blocks are also bounded.
Hence every sequence $\gamma_k\downarrow0$ contains a subsequence along which $\mathscr P$ converges.
Let $\varPhi=[\varPhi_{ij}]$ be any such accumulation point.

Taking the $(0,0)$ block limit in the above equation gives $J_0^{\mathrm H}\varPhi_{00} +\varPhi_{00}J_0=-2\varXi_{00}$.
Since $J_0$ is Hurwitz and $\varXi_{00}>0$, the Lyapunov equation theory gives $\varPhi_{00}>0$.

For $i=0$, $j\ne0$, the limiting Sylvester equation is $J_0^{\mathrm H}\varPhi_{0j} +\lambda_j\varPhi_{0j}=0$, which implies $\varPhi_{0j}=0$ by spectral separation.
Similarly, $\varPhi_{i0}=0$.
For $i\ne j$ with $i,j\ne0$, the limiting equation gives $(\bar\lambda_i+\lambda_j)\varPhi_{ij}=0$.
Since $\bar\lambda_i=-\lambda_i$ and $\lambda_i\ne\lambda_j$, we have $\varPhi_{ij}=0$.

Therefore, every accumulation point of $\mathscr P$ is the same block-diagonal matrix $\varPhi=\bigoplus_{i=0}^{s}\varPhi_{ii}>0$.
Consequently, $\mathscr P\to\varPhi$, which proves \eqref{eqmLp}.
\end{proof}}
We first suppose that $\phi(A)=0$.
In this case, $ \varXi_{00} $, $ \varTheta_{00} $ and $ \varPhi_{00} $ are absent.
From (\ref{eqmLx}), (\ref{eqmLy}) and \eqref{eqmLp} we have
\[
    \lim_{\gamma \downarrow 0}\lambda_{\min}(YX^{-1})=\lambda_{\min}( \varTheta \varXi^{-1} )>1. \label{eqzb77}
\]
It follows from
\[
    \lambda_{\max} \left(\!\!\frac{\gamma ( Y\!\!+\!\!X ) ( Y\!\!-\!\!X ) ^{-1} }{\gamma-\phi(A)} \!\right) = 1\!\!+\!2\!\left( \lambda_{\min}\!\left( YX^{-1} \right)\!-\!\!1  \right) ^{-1},
\]
that (\ref{ksmal}) is proved in this case.

We next suppose that $\phi(A)\neq 0 $, which implies $\phi(A)<0$.
Direct computation shows that $\lim_{\gamma \downarrow 0}\gamma^{\frac{1}{2}}\mathcal{L}\mathscr{L}^{-1} = I_{p_{0}}  \oplus 0 $.
Thus, with the help of \eqref{eqmLx} and \eqref{eqmLp}, we can get
\begin{align*}
    &\lim_{\gamma \downarrow 0} \lambda_{\max}\left(\frac{\gamma}{\gamma-\phi(A)} ( Y+X ) ( Y-X ) ^{-1} \right) \\
    =& -2( \phi(A) )^{-1}\lim_{\gamma \downarrow 0}  \lambda_{\max}(  \gamma X P ^{-1} )\\
    \le & -2( \phi(A) )^{-1} \lambda_{\max}(\varXi)\lambda_{\max}(\varPhi_{00}^{-1}) < \infty,
\end{align*}
which proves (\ref{ksmal}) in this case.
{
Moreover, for any compact interval $\gamma\in[\gamma_1,\gamma_2]\subset(0,\infty)$, $X(\gamma)$ and $Y(\gamma)$ are continuous with respect to $\gamma$.
Since $P(\gamma)=Y(\gamma)-X(\gamma)>0$, the eigenvalue ratio considered above is continuous on $[\gamma_1,\gamma_2]$ and thus admits a finite maximum.
Combining the boundedness for $\gamma\rightarrow0$, $\gamma\rightarrow\infty$, and the intermediate compact interval, there exists a constant $k>0$ such that the inequality in Item 4 holds for all $\gamma>0$.
}

\textbf{\emph{Proof of Item 5}}:
Based on the fact that $\lim_{\gamma \to \infty} L/\gamma^{\mu_{\mathrm{c}}-1}$ is a nonzero matrix, according to \eqref{eqLp}, we have
\begin{equation*}
    \lim_{\gamma \to  \infty}\frac{\lambda_{\max}(P)}{\gamma^{2\mu_{\mathrm{c}}-1}} =\lim_{\gamma \to \infty}\lambda_{\max}\left( \frac{L}{\gamma^{\mu_{\mathrm{c}}-1}}L^{-1}PL^{-1} \frac{L}{\gamma^{\mu_{\mathrm{c}}-1}}\right),
\end{equation*}
which is clearly a positive constant.
Similarly, since $ \lim_{\gamma \downarrow 0}\mathscr{L} $ is a nonzero matrix, it follows from \eqref{eqmLp} that $\lim_{\gamma \downarrow 0} \lambda_{\max}(P)/\gamma= \lim_{\gamma \downarrow 0} \lambda_{\max}( \mathscr{L} \mathscr{L}^{-1}P\mathscr{L}^{-1}\mathscr{L}/\gamma)$, which is clearly a positive constant.

\textbf{\emph{Proof of Item 6}}:
By taking derivative of \eqref{ARE} with respect to $\gamma$, we can get
\begin{align*}
    &A^{\mathrm{T}}\frac{\mathrm{d}P }{\mathrm{d}\gamma}+\frac{\mathrm{d}P }{\mathrm{d}\gamma}A-\frac{\mathrm{d}P }{\mathrm{d}\gamma}BB^{\mathrm{T}}P -PBB^{\mathrm{T}}\frac{\mathrm{d}P }{\mathrm{d}\gamma} +  \gamma  \frac{\mathrm{d}P }{\mathrm{d}\gamma} \\
    =& -X- \gamma  \frac{\mathrm{d}X}{\mathrm{d}\gamma}-Y- \gamma  \frac{\mathrm{d}Y}{\mathrm{d}\gamma} + \gamma  \frac{\mathrm{d}P }{\mathrm{d}\gamma}  \\
    =& -X-Y-2\gamma \frac{\mathrm{d}X}{\mathrm{d}\gamma},
\end{align*}
rearranging which gives
\begin{equation}
    A_{\mathrm{c}}^{\mathrm{T}}\frac{\mathrm{d}P }{\mathrm{d}\gamma} + \frac{\mathrm{d}P }{\mathrm{d}\gamma}A_{\mathrm{c}}= -X-Y-2\gamma \frac{\mathrm{d}X}{\mathrm{d}\gamma},\label{dP1}
\end{equation}
where $A_{\mathrm{c}}=A-BB^{\mathrm{T}}P+\frac{\gamma}{2}I_{n}$.
According to \eqref{ARE}, we have
\begin{equation}
A_{\mathrm{c}}^{\mathrm{T}} P+PA_{\mathrm{c}} = -PBB^{\mathrm{T}}P-2\gamma X. \label{dP2}
\end{equation}
To proceed, we recall that the solution $X$ to the PLE (\ref{lyap0}) satisfies \cite{zhou20auto1}
\begin{equation}
\frac{\mathrm{d}X}{\mathrm{d}\gamma} \geq \frac{X}{n\gamma -2\mathrm{tr}(A)}. \label{eqzb9}
\end{equation}
Combining \eqref{dP1}, \eqref{dP2} and (\ref{eqzb9}) gives
\begin{align}
    &A_{\mathrm{c}}^{\mathrm{T}}\left( 2n\gamma\frac{\mathrm{d}P}{\mathrm{d}\gamma}-P \right)+\left( 2n\gamma\frac{\mathrm{d}P}{\mathrm{d}\gamma}-P \right)A_{\mathrm{c}} \notag \\
    =& -2n\gamma X-2n\gamma Y-4n\gamma^{2}\frac{\mathrm{d}X}{\mathrm{d}\gamma}+PBB^{\mathrm{T}}P+2\gamma X \notag \\
    \le & -2n\gamma X-2n\gamma Y-\frac{4n\gamma^{2}}{n\gamma-2\mathrm{tr}( A )} X \notag \\
    & +\mathrm{tr}( B^{\mathrm{T}}PB )(Y-X) + 2\gamma X \notag \\
    = &  \left( -4n\gamma+2\gamma-\frac{4n\gamma^{2}}{n\gamma-2\mathrm{tr}( A )}   \right) X <0, \label{eqzb8}
\end{align}
where we have used (\ref{eqdVleV0}) and noticed that $n\gamma-2\mathrm{tr}( A )>0$.
Based on \eqref{poles}, we have $\alpha( A_{\mathrm{c}})    = \alpha( A )-2\gamma + \gamma/2  =\alpha( A ) -3\gamma/2<0.$
Thus, by the Lyapunov equation theory, we know from (\ref{eqzb8}) that $ 2n\gamma \mathrm{d}P/\mathrm{d}\gamma-P>0$, which proves the inequality on the left hand side of (\ref{defdc}).

To prove the inequality on the right hand side of (\ref{defdc}), we first recall the well-known Jacobi's formula \cite{magnus2019matrix} : If  $ M $  is a differentiable map from the real numbers to  $ n\times n $  matrices, then
   \begin{equation}
       \frac{\mathrm{d}}{\mathrm{d}t}\det\left( M(t) \right) =\mathrm{tr}\left( \mathrm{adj}( M(t) ) \frac{\mathrm{d}M(t)}{\mathrm{d}t} \right). \label{eqdVleV2}
   \end{equation}
With the help of the Jacobi's formula (\ref{eqdVleV2}), we can get
\begin{align}
    \mathrm{tr}\left( 2n\gamma \frac{\mathrm{d}P}{\mathrm{d}\gamma}P^{-1} \right)
    =& 2n\gamma\mathrm{tr}\left( \frac{\mathrm{d}P}{\mathrm{d}\gamma}\frac{\mathrm{adj}(P)}{\det(P)} \right) \notag\\
    =& \frac{2n\gamma}{\det(P)}\frac{\mathrm{d}}{\mathrm{d}\gamma}( \det(P) ) \notag \\
    =& 2n\gamma \frac{\mathrm{d}}{\mathrm{d}\gamma}( \ln( \det( P ))). \label{eqzb1215}
\end{align}
Since $P$ is a rational matrix of  $ \gamma $, $ \det(P) $  is also a rational function of  $ \gamma $.
According to fundamental theorem of algebra, $ \det(P) $ can be written as follows
\begin{equation*}
    \det(P)=\frac{q_{0}\gamma^{n_{0}}\prod_{i=1}^{n_{1}}( \gamma-q_{i} )}{\prod_{j=1}^{n_{2}}( \gamma-r_{j} )} \triangleq\frac{Q(\gamma)}{R(\gamma)},
\end{equation*}
where $ q_{0} $ is a constant, $ n_{0}>0,\ n_{1} \geq 0 $ and $ n_{2} \geq 0 $ are integers,  $ \{ q_{i} \} $  and  $ \{ r_{j} \} $  are the nonzero roots of the polynomials $ Q(\gamma) $  and  $ R(\gamma) $, respectively.
Here $ n_{0}>0$ follows from Item 5 of Theorem \ref{theoP} that $\lim _{\gamma \downarrow 0} P =0$.
Since $ P $  and  $ P^{-1} $  are positive definite matrices for any  $ \gamma>0 $,  $ \det(P) $  and  $ \det(P^{-1}) $  are positive functions, which means that $ Q(\gamma) $  and  $ R(\gamma) $ have no zeros on the interval  $ (0,\infty) $, namely, $ \{ q_{i} \} $  and  $ \{ r_{j} \} $ are not positive real numbers.
{
Since \(\det(P(\gamma))>0\) for all \(\gamma>0\), its real logarithm is well defined.
Using the logarithmic derivative of the rational function \(\det(P(\gamma))\), we obtain
\begin{align}
  \gamma\frac{\mathrm{d}}{\mathrm{d}\gamma}\ln(\det(P(\gamma)))
&=\gamma\frac{\dfrac{\mathrm{d}}{\mathrm{d}\gamma}\det(P(\gamma))}
{\det(P(\gamma))}\notag\\
&=n_0+\sum_{i=1}^{n_1}\frac{\gamma}{\gamma-q_i}
-\sum_{j=1}^{n_2}\frac{\gamma}{\gamma-r_j}.
    \label{eqzb1215b}
\end{align}
Although nonreal roots may occur, they appear in conjugate pairs, and hence the expression in \eqref{eqzb1215b} is real for every \(\gamma>0\).
}
When $ \gamma \to \infty $, we have
\begin{equation*}
    \lim_{\gamma \to \infty} \left( n_{0} +\sum_{i=1}^{n_{1}} \frac{\gamma}{\gamma-q_{i}} - \sum_{j=1}^{n_{2}} \frac{\gamma}{\gamma-r_{j}} \right)=n_{0}+n_{1}-n_{2},
\end{equation*}
and when $ \gamma \downarrow 0 $, we can get
\begin{equation*}
    \lim_{\gamma \downarrow 0} \left( n_{0}+\sum_{i=1}^{n_{1}} \frac{\gamma}{\gamma-q_{i}} - \sum_{j=1}^{n_{2}} \frac{\gamma}{\gamma-r_{j}} \right)=n_{0}.
\end{equation*}
By the property of continuous functions, it can be obtained from the left hand side of \eqref{defdc}, (\ref{eqzb1215}) and (\ref{eqzb1215b}) that
\begin{align}
    \delta_{\mathrm{c}}&\triangleq \sup_{\gamma>0} \left\{\lambda_{\max}\left( 2n\gamma \frac{\mathrm{d}P}{\mathrm{d}\gamma}P^{-1} \right)  \right\}
    \label{caldc}\\
    &\leq \sup_{\gamma>0}\left\{ \mathrm{tr}\left( 2n\gamma \frac{\mathrm{d}P}{\mathrm{d}\gamma}P^{-1} \right) \right\} \notag \\
   &=2n\left( n_{0} +\sup_{\gamma>0}\left\{\sum_{i=1}^{n_{1}} \frac{\gamma}{\gamma-q_{i}} - \sum_{j=1}^{n_{2}} \frac{\gamma}{\gamma-r_{j}}\right\} \right) \notag \\
   &< \infty, \notag
\end{align}
from which it follows that
\begin{align*}
    2n\gamma P^{-\frac{1}{2}}\frac{\mathrm{d}P}{\mathrm{d}\gamma}P^{-\frac{1}{2}}
    \le& \lambda_{\max}\left( 2n\gamma P^{-\frac{1}{2}}\frac{\mathrm{d}P}{\mathrm{d}\gamma}P^{-\frac{1}{2}} \right) I_{n} \notag\\
    =& \lambda_{\max}\left( 2n\gamma \frac{\mathrm{d}P}{\mathrm{d}\gamma}P^{-1} \right) I_{n}\notag \\
    \leq & \sup_{\gamma>0} \left\{\lambda_{\max}\left( 2n\gamma \frac{\mathrm{d}P}{\mathrm{d}\gamma}P^{-1} \right)  \right\}  I_{n} \\
   =& \delta_{\mathrm{c}}  I_{n}   \leq  \infty,
\end{align*}
which is just the inequality on the right hand side of (\ref{defdc}).

{
We next refine the above uniform estimate by constructing the pointwise upper bound $f(\gamma)$ in Item 6 of Theorem~\ref{theoP}.
Define $H(\gamma)=2n\gamma P^{-\frac{1}{2}} (\mathrm{d}P/\mathrm{d}\gamma)P^{-\frac{1}{2}}$.
It follows from the left hand side of (\ref{defdc}) that $H(\gamma)>I_n$, and hence $H(\gamma)$ is symmetric positive definite.
Let $\lambda_1,\ldots,\lambda_n$ be the eigenvalues of $H(\gamma)$, with $\lambda_n=\lambda_{\max}(H(\gamma))$, and denote their mean by $\bar{\lambda}=\mathrm{tr}(H(\gamma))/n$.
Since $\sum_{i=1}^{n-1}(\lambda_i-\bar{\lambda}) =-(\lambda_n-\bar{\lambda})$, the Cauchy--Schwarz inequality gives $ \sum_{i=1}^{n-1}(\lambda_i-\bar{\lambda})^2 \geq \frac{(\lambda_n-\bar{\lambda})^2}{n-1} $.
Consequently, $\sum_{i=1}^{n}(\lambda_i-\bar{\lambda})^2 \geq n(\lambda_n-\bar{\lambda})^2/(n-1)$.
Since $\sum_{i=1}^{n}(\lambda_i-\bar{\lambda})^2 =\mathrm{tr}(H^2(\gamma))-(\mathrm{tr}(H(\gamma)))^2/n$, it follows that
\begin{align*}
    \lambda_{\max}(H(\gamma))
    \leq&
    \frac{1}{n}\mathrm{tr}\big(H\big) \!+\!
    \sqrt{ \frac{n-1}{n} \left( \mathrm{tr}\big(H^{2}\big) - \frac{1}{n} \big(\mathrm{tr}(H)\big)^{2}
    \right)}.
\end{align*}

By the definition of $f(\gamma)$ in Item 6 of Theorem~\ref{theoP}, the above inequality and (\ref{caldc}) imply $\lambda_{\max}(H(\gamma)) \leq f(\gamma)\leq\delta_{\mathrm{c}}$ for all $\gamma>0$.
Since $H(\gamma)$ is symmetric, we therefore have $ H(\gamma) \leq f(\gamma)I_n \leq \delta_{\mathrm{c}}I_n $.
Recalling the definition of $H(\gamma)$ and applying a congruence transformation with $P^{1/2}$ yield $ 2n\gamma\frac{\mathrm{d}P}{\mathrm{d}\gamma} \leq f(\gamma)P \leq \delta_{\mathrm{c}}P$.
Therefore, $ \frac{\mathrm{d}P}{\mathrm{d}\gamma} \leq \frac{f(\gamma)P}{2n\gamma} \leq \frac{\delta_{\mathrm{c}}P}{2n\gamma}$, which completes the proof of the refined inequalities in (\ref{defdc}).
}
{
\subsection*{C: The Controller Parameters Designed in \cite{song2017time}}
For the considered fourth-order system, the unconstrained control command in \cite{song2017time} is given by $ U_{2}=2x_{3}+2x_{4}-(L_{0}+L_{1}+kz)/\mu, $ where $ z=w_{4}+k_{1}w_{1}+k_{2}w_{2}+k_{3}w_{3} $, $ w_{1}=\mu x_{1} $, $ w_{2}=\mu(a_{1}x_{1}+x_{2}) $, $ w_{3}=\mu(a_{2}x_{1}+2a_{1}x_{2}+x_{3}) $, $ w_{4}=\mu(a_{3}x_{1}+3a_{2}x_{2}+3a_{1}x_{3}+x_{4}) $, $ L_{0}=\mu(a_{4}x_{1}+4a_{3}x_{2}+6a_{2}x_{3}+4a_{1}x_{4}) $, $ L_{1}=k_{1}w_{2}+k_{2}w_{3}+k_{3}w_{4} $, $ \mu=T^{n+p}/(T-t)^{n+p} $,
% \[
% \begin{aligned}
% a_{1}&=\frac{n+m}{T}\frac{T}{T-t},\\
% a_{2}&=\frac{(n+m)(n+m+1)}{T^{2}}
% \left(\frac{T}{T-t}\right)^2,\\
% a_{3}&=\frac{(n+m)(n+m+1)(n+m+2)}{T^{3}}
% \left(\frac{T}{T-t}\right)^3,\\
% a_{4}&=\frac{(n+m)(n+m+1)(n+m+2)(n+m+3)}{T^{4}}
% \left(\frac{T}{T-t}\right)^4 .
% \end{aligned}
% \]
$ a_{1}=(n+p)/(T-t) $, $ a_{2}=(n+p)(n+p+1)/(T-t)^{2} $, $ a_{3}=(n+p)(n+p+1)(n+p+2)/(T-t)^{3} $ and $ a_{4}=(n+p)(n+p+1)(n+p+2)(n+p+3)/(T-t)^{4} $.
The parameters of the compared controller are selected as $ n=4,\ p=2,\ k=2 $, and $ [k_{1},k_{2},k_{3}]^{\mathrm{T}}=[8,12,6]^{\mathrm{T}} $.
% The prescribed settling times and initial conditions are selected as $ T\in\{64,108,150,192\} $ and $ x(0)=\frac{D}{2}[1,-1,1,-1]^{\mathrm{T}}, \ D\in\{1,2,3,4\} $.
}

\end{document}